\documentclass[12pt,a4paper,oneside,reqno]{article}
\usepackage[utf8]{inputenc}
\usepackage[english]{babel}
\usepackage[symbol]{footmisc}
\usepackage{amssymb,amsmath,amsthm,amsfonts,xcolor, enumerate,hyperref, comment,longtable, cleveref}
\usepackage[all]{xy}
\usepackage{tikz-cd}
\usetikzlibrary{babel}
\usepackage{tikz}
\usepackage{tkz-graph}
\usepackage{verbatim}
\usepackage{multirow}
\usepackage{times}
\usepackage{cite}
\usepackage{pdflscape}
\usepackage{ulem}
\usepackage[mathcal]{euscript}
\usepackage{tikz}
\usepackage{hyperref}
 \usepackage{mathrsfs}
\usepackage{cancel}
\usepackage{stmaryrd}
\usepackage{amsfonts}
\usepackage{amssymb}
\usepackage{times}
\usepackage{xcolor}
\usepackage{tkz-graph}
\usepackage{url}
\usepackage{float}
\usepackage{tasks}
\usepackage{array}

\usepackage{cite}
\usepackage{hyperref}

 \usepackage{fancyhdr} 
\usepackage{amsfonts}
\usepackage{amsmath}
\usepackage{eurosym}
\usepackage{geometry}

\usepackage{caption,booktabs}

\theoremstyle{plain}
\newtheorem{theorem}{Theorem}
\newtheorem{lemma}[theorem]{Lemma}

\newtheorem{definition}[theorem]{Definition}

\usetikzlibrary{arrows}

\usepackage{fouriernc}

\begin{document}
 
 \bigskip

\noindent{\Large
The geometric classification of left-symmetric \\ 
algebras and superalgebras}

\begin{center}

 {\bf
Kobiljon Abdurasulov\footnote{CMA-UBI, University of  Beira Interior, Covilh\~{a}, Portugal;  \ 
Romanovsky Institute of Mathematics, Academy of Sciences of Uzbekistan, Tashkent, Uzbekistan; \ abdurasulov0505@mail.ru},
Nigora Daukeyeva \footnote{Chirchik State Pedagogical University, Chirchik, Uzbekistan; \ daukeyeva.cspu@gmail.com}, and 
Azamat Saydaliyev\footnote{Institute of Mathematics, Academy of
Sciences of Uzbekistan, Tashkent, Uzbekistan;  \ azamatsaydaliyev6@gmail.com}
   
}

\end{center}

\ 

\noindent {\bf Abstract:}
{\it  
We study the geometric classification of complex left‑symmetric algebras and left‑symmetric superalgebras in low dimensions. Based on the algebraic classifications of Bai and Zhang (3‑dimensional left‑symmetric algebras, 2‑ and 3‑dimensional left‑symmetric superalgebras), we determine all degenerations and non‑degenerations between the isomorphism classes. For the variety of 
3-dimensional left‑symmetric algebras, we show that its dimension is 10 and that it decomposes into 
31 irreducible components, ten of which are rigid. For left‑symmetric superalgebras, we describe the irreducible components, compute their dimensions, and identify the rigid superalgebras in the varieties of dimensions $(1,1)$, $(1,2)$ and $(2,1).$
}

 \bigskip 

\noindent {\bf Keywords}:
{\it 
Left‑symmetric algebra,   superalgebra, geometric classification, degeneration.}

\bigskip 

 \
 
\noindent {\bf MSC2020}:  
17A30 (primary);
17B63,
14L30 (secondary).

	 \bigskip

\ 

\


\tableofcontents 
\newpage
{

\section*{Introduction}

Left-symmetric algebras (LSAs) appear naturally in many areas of mathematics and physics. They were initially introduced by A.~Cayley in 1896 in the context of rooted tree algebras \cite{CAY}, but were quickly forgotten until the 1960s. At that time, they reappeared independently in the works of Vinberg \cite{VIN} (in the original Russian version, 1960) and Koszul \cite{KOS} (1961) in the study of convex homogeneous cones and affinely flat manifolds. Independently, they were also introduced by Gerstenhaber during the same period. Since then, LSAs have been studied under various names: Vinberg algebras, Koszul algebras, quasi-associative algebras, and (on the opposite side) pre-Lie algebras or Gerstenhaber algebras \cite{CHL}. The commutator of a left-symmetric algebra defines a Lie algebra. Recently, pre-Lie algebras have appeared in the study of stochastic partial differential equations \cite{Foissy2025}, and cohomological aspects have been investigated for left-symmetric Rinehart algebras \cite{BenHassine2024} and for dendriform and pre-Lie algebras \cite{Alhussein2026}.

The algebraic classification of low-dimensional LSAs has been an active area of research. The complete classification of complex $3$-dimensional LSAs was obtained by Bai \cite{Bai}. Later, Zhang and Bai \cite{ZhangBai2012} classified low-dimensional left-symmetric superalgebras, including all $2$- and $3$-dimensional superalgebras. These algebraic lists provide the foundation for the geometric study of the corresponding varieties. Structural results on related classes, such as free metabelian Novikov and Lie-admissible algebras, have also been obtained \cite{Dauletiyarova2025}.

The geometric approach to varieties of algebras, initiated in the 1970s, studies degenerations and irreducible components of the affine variety defined by a set of polynomial identities. Grunewald and O'Halloran \cite{GRH} pioneered this method for nilpotent Lie algebras, and Burde and Steinhoff \cite{bs} constructed the degeneration graphs for $3$- and $4$-dimensional Lie algebras. Similar investigations have been carried out for many other varieties, including Jordan algebras, Poisson algebras, and Leibniz algebras (see, e.g., \cite{ahk, als, FKS1, FKS2}). The geometric classification of varieties of algebras has been surveyed comprehensively in \cite{AKK}, and general classification and structure theory of non-associative algebras are presented in \cite{Kaygorodov2024}; see also \cite{Lopes2024} for a broader perspective on noncommutative algebra and representation theory.

The present paper is devoted to the geometric classification of $3$-dimensional left-symmetric algebras and superalgebras over the complex numbers. Using the algebraic classifications of Bai \cite{Bai} and Zhang and Bai \cite{ZhangBai2012}, we determine the irreducible components of the corresponding varieties, compute the dimensions of orbit closures, and find all rigid algebras. 

For the variety of $3$-dimensional left-symmetric algebras, we prove that its dimension is $10$ and it decomposes into $31$ irreducible components (Theorem~\ref{thm5}); nine of these components correspond to rigid algebras. For left-symmetric superalgebras, we obtain the following results:
\begin{itemize}
\item The variety of $2$-dimensional left-symmetric superalgebras (with one-dimensional even part) has dimension $3$ and consists of $3$ irreducible components, exactly one of which corresponds to a rigid superalgebra.
\item The variety of $3$-dimensional left-symmetric superalgebras with one-dimensional even part has dimension $7$ and decomposes into $11$ irreducible components, one of which corresponds to a rigid superalgebra.
\item The variety of $3$-dimensional left-symmetric superalgebras with two-dimensional even part has dimension $7$ and decomposes into $13$ irreducible components, two of which correspond to rigid superalgebras.
\end{itemize}

The paper is organised as follows. Section~1 recalls the necessary notations and definitions, including the concept of degenerations and irreducible components. Section~2 presents the algebraic classification of $3$-dimensional left-symmetric algebras (due to Bai) and then provides the geometric classification of the corresponding variety. Section~3 presents the algebraic classification of left-symmetric superalgebras (due to Zhang and Bai) and then provides the geometric classification of the varieties of $(1,1)$-, $(1,2)$- and $(2,1)$-dimensional superalgebras.
}
\section{Definitions and notation}

Let \( V = V_0 \oplus V_1 \) be a \( \mathbb{Z}_2 \)-graded vector space with a fixed homogeneous basis  
\( \big\{e_1, \ldots, e_m, f_1, \ldots, f_n \big\}\). 
A  superalgebra structure on \(V\) can be described via structure constants 
\((\alpha_{ij}^k, \beta_{ij}^k, \gamma_{ij}^k, \delta_{ij}^k) \in \mathbb{C}^{m^3+3mn^2}\), where the multiplication is defined as:
\[
e_i e_j = \sum_{k=1}^{m} \alpha_{ij}^k e_k, \quad 
e_i f_j = \sum_{k=1}^{n} \beta_{ij}^k f_k, \quad 
f_i e_j = \sum_{k=1}^{n} \gamma_{ij}^k f_k, \quad 
f_i f_j = \sum_{k=1}^{m} \delta_{ij}^k e_k.
\]
Let $\mathcal{S}^{m,n}$ denote the set of all superalgebras of dimension $(m,n)$ defined by a family of polynomial superidentities $T$, regarded as a subset $\mathbb{L}(T)$ of the affine variety $\operatorname{Hom}(V \otimes V, V)$. Then $\mathcal{S}^{m,n}$ is a Zariski-closed subset of the variety $\operatorname{Hom}(V \otimes V, V)$. 

The group $G = (\operatorname{Aut} V)_0 \simeq \operatorname{GL}(V_0) \oplus \operatorname{GL}(V_1)$ acts on $\mathcal{S}^{m,n}$ by conjugation:
\[
(g * \mu)(x \otimes y) = g \mu(g^{-1} x \otimes g^{-1} y),
\]
for all $x, y \in V$, $\mu \in \mathbb{L}(T)$, and $g \in G$.

Let $\mathcal{O}(\mu)$ denote the orbit of $\mu \in \mathbb{L}(T)$ under the action of $G$, and let $\overline{\mathcal{O}(\mu)}$ be the Zariski closure of $\mathcal{O}(\mu)$. Suppose $J, J' \in \mathcal{S}^{m,n}$ are represented by $\lambda, \mu \in \mathbb{L}(T)$, respectively. We say that $\lambda$ degenerates to $\mu$, denoted $\lambda \to \mu$, if $\mu \in \overline{\mathcal{O}(\lambda)}$. In this case, we have $\overline{\mathcal{O}(\mu)} \subset \overline{\mathcal{O}(\lambda)}$. Therefore, the notion of degeneration does not depend on the particular representatives, and we write $J \to J'$ instead of $\lambda \to \mu$, and $\mathcal{O}(J)$ instead of $\mathcal{O}(\lambda)$. 
We write $J \not\to J'$ to indicate that $J' \notin \overline{\mathcal{O}(J)}$.

If $J$ is represented by $\lambda \in \mathbb{L}(T)$, we say that $J$ is \textit{rigid} in $\mathbb{L}(T)$ if $\mathcal{O}(\lambda)$ is an open subset of $\mathbb{L}(T)$. A subset of a variety is called \textit{irreducible} if it cannot be written as a union of two proper closed subsets. A maximal irreducible closed subset is called an \textit{irreducible component}. In particular, $J$ is rigid in $\mathcal{S}^{m,n}$ if and only if $\overline{\mathcal{O}(\lambda)}$ is an irreducible component of $\mathbb{L}(T)$. It is a well-known fact that every affine variety admits a unique decomposition into finitely many irreducible components.


To find degenerations, we use the standard methods, described in \cite{GRH,als,ahk,bs} and so on.
To prove a non-degeneration $J \not\to J',$ we use the standard argument from a lemma, whose proof is the same as the proof of   \cite[Lemma 1.5]{GRH}.

\begin{lemma}
Let $\mathfrak{L}$ be a Borel subgroup of $\mathrm{ GL}( V)$ and $\mathrm{ R}\subset \mathbb{L}(T)$ be a $\mathfrak{L}$-stable closed subset.
If $J  \to J'$ and  the superalgebra $J $ can be represented by a structure $\mu\in\mathrm{ R}$, then there is $\lambda\in \mathrm{ R}$ representing $J'$.
\end{lemma}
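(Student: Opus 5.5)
The plan is to reproduce the argument of \cite[Lemma 1.5]{GRH} in the graded setting. The one point to settle first is that the relevant group is $G=\operatorname{GL}(V_0)\oplus\operatorname{GL}(V_1)$, and that $\mathfrak{L}$ should be read as a Borel subgroup of $G$ (a product of Borel subgroups of the two factors), so that it acts on $\mathbb{L}(T)$. The key facts are then:
\begin{itemize}
\item \emph{Completeness of $G/\mathfrak{L}$.} The quotient $G/\mathfrak{L}$ is a projective variety, being a product of flag varieties.
\item \emph{Orbit closures for connected groups.} Since $G$ is connected, the orbit closure $\overline{\mathcal{O}(\mu)}$ equals the closure of $G\cdot \mu$.
\end{itemize}

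The main step is to show that $G\cdot \mathrm{R}$ is closed in $\mathbb{L}(T)$. Consider the closed subset
\[
Z=\{(g\mathfrak{L},x)\in G/\mathfrak{L}\times \mathbb{L}(T) : g^{-1}x\in \mathrm{R}\}.
\]
It is well defined precisely because $\mathrm{R}$ is $\mathfrak{L}$-stable. It is closed because it is the image in $G/\mathfrak{L}\times\mathbb{L}(T)$ of the closed set $\{(g,x): g^{-1}x\in\mathrm{R}\}\subset G\times\mathbb{L}(T)$. That set is saturated for the $\mathfrak{L}$-action, and $G\to G/\mathfrak{L}$ is a quotient map, so its image is closed. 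Alternatively, one can check closedness locally using local sections of $G\to G/\mathfrak{L}$ over an open cover.

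The projection of $Z$ to $\mathbb{L}(T)$ is exactly $G\cdot\mathrm{R}$. Since $G/\mathfrak{L}$ is complete, the projection $G/\mathfrak{L}\times\mathbb{L}(T)\to\mathbb{L}(T)$ is a closed map, and therefore $G\cdot\mathrm{R}$ is closed.

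With this in hand the lemma follows directly. Since $\mu\in\mathrm{R}$, the orbit $\mathcal{O}(\mu)=G\mu$ lies in $G\cdot\mathrm{R}$. As $G\cdot\mathrm{R}$ is closed, we get $\overline{\mathcal{O}(\mu)}\subset G\cdot\mathrm{R}$. The degeneration $J\to J'$ means that some structure $\nu$ representing $J'$ lies in $\overline{\mathcal{O}(\mu)}$. Hence $\nu=g\lambda$ for some $g\in G$ and $\lambda\in\mathrm{R}$. Then $\lambda=g^{-1}\nu$ is isomorphic to $\nu$, so $\lambda\in\mathrm{R}$ represents $J'$.

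The main obstacle is the properness step: $Z$ must be closed and $G/\mathfrak{L}$ must be complete. The completeness of $G/\mathfrak{L}$ is Borel's theorem, applied factorwise to the reductive group $G$.
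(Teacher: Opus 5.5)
Your proof is correct and follows essentially the same approach as the paper. The paper gives no argument of its own and defers to Grunewald--O'Halloran's Lemma 1.5, whose standard proof is your completeness argument: $G\cdot\mathrm{R}$ is closed because $G/\mathfrak{L}$ is complete, hence $\overline{\mathcal{O}(\mu)}\subset G\cdot\mathrm{R}$. Reading $\mathfrak{L}$ as a Borel subgroup of $G=\operatorname{GL}(V_0)\times\operatorname{GL}(V_1)$ rather than of $\operatorname{GL}(V)$ is the necessary adjustment for superalgebras, and your local-sections argument is the clean way to see that $Z$ is closed.
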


All the algebras considered below are over $\mathbb{C}$, and all linear maps are $\mathbb{C}$-linear. For simplicity, whenever we write the multiplication table of an algebra, the products of basis elements whose values are zero are omitted.

\section{Left-symmetric algebras}

\subsection{Algebraic classification}
\begin{definition} 
Let $A$ be a vector space with 
 a bilinear product $(x,y) \longrightarrow xy$. $A$ is called a left-symmetric algebra if it satisfies 

\begin{longtable}{c}
$(xy)z-x(yz)=(yx)z-y(xz).$
\end{longtable} 
\end{definition}

Let $S$ be a non-associative algebra with a bilinear product $(x, y) \to 
x y$. Then $S$ is called a superalgebra if the underlying vector space of $S$ is $\mathbb{Z}_2$-graded, that is, $S = S_{\bar{0}} \oplus S_{\bar{1}}$ and
$S_\alpha  S_\beta \subseteq S_{\alpha+\beta}$ for $\alpha, \beta \in \mathbb{Z}_{2}$. The elements in $S_{\bar{0}}$ are said to be even and those in $S_{\bar{1}}$ are odd. A superalgebra is said to be non-trivial if $S_{\bar{0}} \neq \{0\}$.

Let us present the results of the algebraic classification that will be used in our work.

Firstly, it was proven by Jacobson \cite{Jac}, that, up to isomorphism, there are the following three-dimensional Lie algebras, which are solvable:

(a) Abelian Lie algebra;

(b) $\mathrm{ H}: \ \ [e_1,e_2]=e_3$;

(c) $\mathrm{ N}: \ \ [e_3,e_2]=e_2$;

(d) $\mathrm{ D}_\ell: \ \ [e_3,e_1]=e_1, \ [e_3,e_2]=\ell e_2\qquad
0<|\ell|<1$ or $\ell=e^{i\theta}, 0\leq \theta\leq \pi$;

(e) $\mathrm{ E}: \ \ [e_3,e_1]=e_1, [e_3, e_2]=e_1+e_2$.

\begin{theorem}[see \cite{bs}] \label{thm3}
The possible degenerations of three-dimensional Lie algebras are given in the following diagram:

\begin{center}
   \begin{tikzpicture}[->,>=stealth',shorten >=0.0cm,auto,thick,
    every node/.style={font=\sffamily\small\bfseries}]

\newlength{\refwidth}
\settowidth{\refwidth}{$D_{-1}$}
\setlength{\refwidth}{1.2\refwidth}   

\tikzset{
    source node/.style={rectangle,draw,fill=black!20,rounded corners=1ex,
                        minimum width=\refwidth, minimum height=0.9cm,
                        font=\sffamily\small\bfseries},
    target node/.style={rectangle,draw,fill=gray!12,rounded corners=1ex,
                        minimum width=\refwidth, minimum height=0.9cm,
                        font=\sffamily\small\bfseries}
}

\def\colDist{2.2}   

\node (l6) at (-2.2,0)   {\small 6};
\node (l5) at (-2.2,-1.8) {\small 5};
\node (l3) at (-2.2,-3.6) {\small 3};
\node (l0) at (-2.2,-5.4) {\small 0};

\node[source node] (sl2)  at (0,0)              {$\mathfrak{sl}_2$};
\node[source node] (Dl)   at (\colDist,0)       {$\mathrm{D}_\ell$};

\node[target node] (Dm1)  at (0,-1.8)           {$\mathrm{D}_{-1}$};
\node[source node] (N)    at (2*\colDist,-1.8)  {$\mathrm{N}$};
\node[source node] (E)    at (3*\colDist,-1.8)  {$\mathrm{E}$};

\node[target node] (H)    at (\colDist,-3.6)    {$\mathrm{H}$};
\node[target node] (D1)   at (2*\colDist,-3.6)  {$\mathrm{D}_1$};

\node[target node] (C3)   at (\colDist,-5.4)    {$\mathbb{C}^3$};

\draw[->] (Dl)   -- (H);      
\draw[->] (N)    -- (H);      
\draw[->] (sl2)  -- (Dm1);    
\draw[->] (H)    -- (C3);     
\draw[->] (E)    -- (D1);     
\draw[->] (Dm1)  -- (H);      
\draw[->] (D1)   -- (C3);     

\end{tikzpicture}
\end{center}

\end{theorem}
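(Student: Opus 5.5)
The plan is to prove the degeneration diagram of Theorem~\ref{thm3} in two halves. First, every arrow drawn is realised by an explicit one-parameter family of basis changes. Second, no other arrows exist beyond the transitive closure of those drawn. Throughout, orbit dimensions are computed as $\dim\mathcal{O}(\mathfrak{g}) = 9 - \dim\operatorname{Der}(\mathfrak{g})$, which gives the levels $6,5,3,0$ on the left of the diagram. This immediately rules out any degeneration from a lower level to a higher or equal one, except the trivial ones.

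For the arrows, we write down a basis $E_i^t$ depending on $t$, expand the brackets, and let $t\to 0$.
\begin{itemize}
\item For $\mathrm{D}_\ell\to \mathrm{H}$, take $E_1=e_1+e_2$, $E_2=t e_3$, $E_3=t(e_1+\ell e_2)$. The bracket becomes $[E_2,E_1]=E_3$, and the extra terms vanish in the limit (one can adjust to $E_1=e_1+e_2$, $E_3=t(e_1+\ell e_2)$ when $\ell\neq 1$).
\item For $\mathrm{N}\to\mathrm{H}$, $\mathfrak{sl}_2\to\mathrm{D}_{-1}$ and $\mathrm{D}_{-1}\to\mathrm{H}$, use the same mechanism, i.e.\ a contraction onto an ideal together with scaling.
\item For $\mathrm{E}\to\mathrm{D}_1$, take $E_1=te_1$, $E_2=e_2$, $E_3=e_3$. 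The term $e_1$ in $[e_3,e_2]$ then acquires the factor $t$.
\item The arrows $\mathrm{H}\to\mathbb{C}^3$ and $\mathrm{D}_1\to\mathbb{C}^3$ come from scaling all $E_i=te_i$.
\end{itemize}

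For the non-degenerations, we use invariants preserved under degeneration: $\dim[\mathfrak{g},\mathfrak{g}]$ is upper semicontinuous in the sense that it cannot increase, the dimension of the center cannot decrease, solvability/nilpotency is closed, and the trace form conditions $\operatorname{tr}(\operatorname{ad}x)^2$ versus $(\operatorname{tr}\operatorname{ad}x)^2$ are polynomial identities that pass to the closure. With these:
\begin{itemize}
\item $\mathrm{N},\mathrm{E},\mathrm{D}_\ell\not\to \mathrm{D}_{-1}$ or $\mathrm{D}_1$ unless an arrow is drawn. Here we use the invariant $c\,\operatorname{tr}(\operatorname{ad}x)^2=(\operatorname{tr}\operatorname{ad}x)^2$ with $c$ determined by the source; for $\mathrm{D}_\ell$ this identity holds with $c=(1+\ell)^2/(1+\ell^2)$ and fails for $\mathrm{D}_{-1}$.
\item $\mathfrak{sl}_2\not\to \mathrm{D}_\ell$ by the dimension count at the same level.
\item $\mathrm{D}_1\not\to\mathrm{H}$, because $\mathrm{D}_1$ satisfies $[[x,y],z]$ in $\operatorname{span}$ relations forcing $\operatorname{ad}$ to be scalar on the derived algebra. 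Equivalently, in $\mathrm{D}_1$ every element $x$ satisfies $[x,[x,y]]\in\mathbb{C}[x,y]$, a closed condition failing in $\mathrm{H}$, since $\mathrm{H}$ is nilpotent but the closure of $\mathcal{O}(\mathrm{D}_1)$ consists of Lie brackets of the form $[x,y]=\phi(x)y-\phi(y)x$.
\end{itemize}

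The Borel-subgroup Lemma (from \cite{GRH}) can be applied with suitable $\mathfrak{L}$-stable closed sets where invariants are awkward.

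The main obstacle is showing that $\mathrm{D}_1$ and $\mathrm{E}$ do not degenerate to $\mathrm{H}$, and more generally that the family $\mathrm{D}_\ell$ behaves uniformly. For this we will use the description of $\overline{\mathcal{O}(\mathrm{D}_1)}$ as the set of brackets $[x,y]=\phi(x)y-\phi(y)x$, which is closed and does not contain $\mathrm{H}$. For $\mathrm{E}$, we note that its orbit has the same dimension $5$ as that of $\mathrm{H}$ plus two; the orbit closure $\overline{\mathcal{O}(\mathrm{E})}$ is then the union of $\mathcal{O}(\mathrm{E})$, $\mathcal{O}(\mathrm{D}_1)$ and $\mathcal{O}(\mathbb{C}^3)$, which we verify via the closed condition $\operatorname{rank}\operatorname{ad}x\le$ $(\operatorname{tr}\operatorname{ad}x)$-dependent: in $\mathrm{E}$ every nilpotent $\operatorname{ad}x$ is zero, whereas $\mathrm{H}$ has nonzero nilpotent $\operatorname{ad}e_1$. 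Since the statement is attributed to \cite{bs}, the final step is to cross-check our list against that reference.
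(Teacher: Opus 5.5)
Your plan fails at the step you single out as the main obstacle. $\mathrm{E}\not\to\mathrm{H}$ is false, so no invariant can establish it. In $\mathrm{E}$ take $E_1^t=te_3$, $E_2^t=e_2$, $E_3^t=t(e_1+e_2)$. Then $[E_1^t,E_2^t]=E_3^t$, $[E_2^t,E_3^t]=0$ and $[E_1^t,E_3^t]=t^2(2e_1+e_2)=2tE_3^t-t^2E_2^t\to 0$, so the limit is $\mathrm{H}$.

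This is your own cyclic-vector construction for $\mathrm{D}_\ell\to\mathrm{H}$. It works for every $\mathbb{C}e_3\ltimes\mathbb{C}^2$ on which $\operatorname{ad}e_3$ is not scalar, and the Jordan block of $\mathrm{E}$ is not scalar. Correspondingly, the invariant you invoke is false: in $\mathrm{E}$ the maps $\operatorname{ad}e_1$ ($e_3\mapsto -e_1$) and $\operatorname{ad}e_2$ ($e_3\mapsto -e_1-e_2$) are nonzero and nilpotent. The correct closure is $\overline{\mathcal{O}(\mathrm{E})}=\mathcal{O}(\mathrm{E})\cup\mathcal{O}(\mathrm{H})\cup\mathcal{O}(\mathrm{D}_1)\cup\mathcal{O}(\mathbb{C}^3)$.

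Hence the half of your plan that says there are no arrows beyond the transitive closure of the drawn ones cannot be carried out for the diagram as printed. The diagram omits the arrow $\mathrm{E}\to\mathrm{H}$; as far as I recall, it does appear as $\mathfrak{r}_3\to\mathfrak{n}_3$ in Burde--Steinhoff's Hasse diagram. The paper gives no argument of its own here, only the citation, so the cross-check you postpone to the end is exactly where this would surface. What is provable is the diagram with $\mathrm{E}\to\mathrm{H}$ added.

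Several other steps also need repair.
\begin{itemize}
\item For $\ell\neq\pm1$ one has $\dim\operatorname{Der}(\mathrm{D}_\ell)=4$, so $\dim\mathcal{O}(\mathrm{D}_\ell)=5$. The label $6$ is the dimension of the one-parameter family, not of an orbit, so $\mathfrak{sl}_2\not\to\mathrm{D}_\ell$ is not a dimension count. Use unimodularity instead: $\operatorname{tr}\operatorname{ad}x=0$ is a closed condition that holds in $\mathfrak{sl}_2$ and $\mathrm{D}_{-1}$ and fails in $\mathrm{D}_\ell$ ($\ell\neq-1$), $\mathrm{N}$, $\mathrm{E}$ and $\mathrm{D}_1$.
\item For the target $\mathrm{D}_1$, write your trace invariant without dividing, as $(1+\ell^2)(\operatorname{tr}\operatorname{ad}x)^2=(1+\ell)^2\operatorname{tr}((\operatorname{ad}x)^2)$. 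Your $c$ is undefined at $\ell=i$, which lies in the parameter range. In $\mathrm{D}_1$ this identity forces $(\ell-1)^2=0$.
\item Similarly, $\mathrm{N}$ and $\mathrm{D}_{-1}$ satisfy $(\operatorname{tr}\operatorname{ad}x)^2=c\operatorname{tr}((\operatorname{ad}x)^2)$ with $c=1$ and $c=0$, whereas $\mathrm{D}_1$ needs $c=2$. So the relevant failure is at $\mathrm{D}_1$. $\mathrm{D}_{-1}$ is excluded as a target of $\mathrm{N}$, $\mathrm{E}$, $\mathrm{D}_\ell$ simply because the orbit dimensions are equal.
\item $\mathrm{D}_1\not\to\mathrm{H}$ is immediate, since both orbits have dimension $3$.
\item Your family for $\mathrm{E}\to\mathrm{D}_1$ diverges: with $E_1=te_1$ one gets $[E_3,E_2]=t^{-1}E_1+E_2$. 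Take $E_1=e_1$, $E_2=te_2$, $E_3=e_3$ instead, which gives $[E_3,E_2]=tE_1+E_2\to E_2$.
\end{itemize}
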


 Using these algebras, the algebraic classification of three-dimensional complex left-symmetric algebras was obtained in \cite{Bai}. Since the original classification contains algebras that are special cases of broader parametric families, we have omitted these redundancies and present only representatives of the non-isomorphic families. Consequently, the table below has been revised and renumbered relative to the list in \cite{Bai}.

 \begin{longtable}{l l l l l l l}
\hline
\multicolumn{7}{c}{\textbf{The left-symmetric algebras on $\rm H$:}} \\
\hline
 $\mathrm{ H}_{1}$  &
 $e_{1}  e_{1}=e_{1}$ &
 $e_{1}  e_{2}=e_{2}+e_{3}$ &
 $e_{1}  e_{3}=e_{3}$\\
 &$e_{2}  e_{1}=e_{2}$&
 $e_{3}  e_{1}=e_{3}$&
\\ \hline

$\mathrm{ H}_{2}$  &
 $e_{1}  e_{1}=e_{1}$ &
 $e_{1}  e_{2}=e_{2}+e_{3}$ &
 $e_{1}  e_{3}=e_{3}$\\
& $e_{2}  e_{1}=e_{2}$&
 $e_{2}  e_{2}=e_{3}$&
 $e_{3}  e_{1}=e_{3}$& 
 \\ \hline

 $\mathrm{ H}_{3}$  &
 $e_{1}  e_{1}=e_{1}$ &
 $e_{1}  e_{2}=e_{3}$ &
 $e_{2}  e_{2}=e_{3}$&
 &
 &
 \\ \hline

 $\mathrm{ H}_{4}$  &
 $e_{1}  e_{1}=e_{1}$ &
 $e_{1}  e_{2}=e_{3}$&
 &
 &
 &
 \\ \hline

$\mathrm{ H}_{5}$  &
 $e_{2}  e_{1}=-e_{3}$ &
 $e_{2}  e_{2}=e_{1}$ &
 &
 &
 &
 \\ \hline

  $\mathrm{ H}_{6}^{\lambda\neq 0}$  &
 $e_{1}  e_{1}=e_{3}$ &
 $e_{1}  e_{2}=e_{3}$ &
$e_{2}  e_{2}=\lambda e_{3}$ &
 &
 &
 \\ \hline

 $\mathrm{ H}_{7}$  &
 $e_{1}  e_{2}=\frac{1}{2}e_{3}$ &
 $e_{2} e_{1}=-\frac{1}{2}e_{3}$ &
 &
 &
 &
 \\ \hline

 $\mathrm{ H}_{8}$  &
 $e_{1}  e_{2}=e_{3}$ &
 $e_{2} e_{2}=e_{1}$ &
 &
 &
 &
 \\ \hline

 $\mathrm{ H}_{9}^{\lambda\neq 1}$  &
 $e_{1}  e_{2}=\frac{\lambda}{\lambda-1}e_{3}$ &
 $e_{2} e_{1}=\frac{1}{\lambda-1}e_{3}$ &
 $ e_{2}  e_{2}=\lambda e_{1}$ &
 &
 &
 \\ \hline

\multicolumn{7}{c}{\textbf{The left-symmetric algebras on $\rm N$:}} \\
\hline
 
{$\mathrm{ N}_{1}^{\lambda, \mu\neq0}$}  &
  $e_{1}  e_{1}=e_{1}+\frac{\lambda(\lambda-1)}{\mu} e_{3}$ &
 $e_{1}  e_{3}=\lambda e_{3}$ &
 $e_{3}  e_{1}=\lambda e_{3}$&
 & &
 \\

 &
 $e_{3} e_{2}=e_{2}$ &
 $e_{3} e_{3}=\mu e_{3}$&
 & &
 & \\ \hline
 
 $\mathrm{ N}_{2}^\lambda$  &
 $e_{1}  e_{1}=e_{1}+\lambda e_{3}$ &
 $e_{3}  e_{2}=e_{2}$&
 & &
 & \\ \hline

  $\mathrm{ N}_{3}$  &
 $e_{1}  e_{1}=e_{1}$ &
 $e_{3}  e_{2}=e_{2}$ &
 $e_{3}  e_{3}=e_{2}+e_{3}$ &
 & &
 \\ \hline

{$\mathrm{ N}_{4}$}  &
 $e_{1}  e_{3}=e_{2}$ &
 $e_{3} e_{1}=e_{2}$ &
 $e_{3} e_{2}=e_{2}$&
 & &
\\

 &
 $e_{3} e_{3}=-e_{2}+e_{3}$&
 &
 &
 & &
 \\ \hline

{$\mathrm{ N}_{5}^{\lambda}$}  &
 $e_{1}  e_{1}=e_{1}-e_{2}$ &
 $e_{1} e_{3}=e_{2}$ &
 $e_{3} e_{1}=e_{2}$&
 & &
 \\

 &
 $e_{3} e_{2}=e_{2}$&
 $e_{3} e_{3}=-{\lambda e_{2}}+e_{3}$ &
 &
 & &
 \\ \hline

{$\mathrm{ N}_{6}^{\lambda}$}  &
 $e_{1}  e_{1}=e_{1}+{\lambda}e_{3}$ &
 $e_{1} e_{3}=e_{3}$ &
 $e_{3} e_{1}=e_{3}$&
 & &
 \\

 &
 $e_{3} e_{2}=e_{2}$&
 &
 &
 & &
 \\ \hline

 {$\mathrm{ N}_{7}$}  &
 $e_{1}  e_{1}=e_{2}$ &
 $e_{1} e_{3}=e_{1}$ &
 $e_{3} e_{1}=e_{1}$&
 & &
 \\

 &
 $e_{3} e_{2}=e_{2}$ & 
 $e_{3} e_{3}=e_{2}+e_{3}$&
 &
 & &
 \\ \hline

{$\mathrm{ N}_{8}^{\lambda,\mu \neq 0}$}  &
 $e_{1}  e_{1}=e_{1}+\frac{\lambda(\lambda-1)}{\mu}e_{3}$ &
 $e_{1} e_{2}=e_{2}$ &
 $e_{1} e_{3}=\lambda e_{3}$&
 & &

 \\
 &
 $e_{2} e_{1}=e_{2}$ & 
 $e_{3} e_{1}=\lambda e_{3}$&
 $e_{3}  e_{2}=e_{2}$& 
 & &
 \\
 &
  $e_{3}  e_{3}=\mu e_{3}$&
  &
  &
  & &
  \\ \hline

{$\mathrm{ N}_{9}^{\lambda}$}  &
 $e_{1}  e_{1}=e_{1}+\lambda e_{3}$ &
 $e_{1} e_{2}=e_{2}$ &
 $e_{2} e_{1}=e_{2}$&
 & &
 \\

 &
 $e_{3} e_{2}=e_{2}$ & 
 &
 &
 & &
 \\ \hline

{$\mathrm{ N}_{10}^{\lambda }$}  &
 $e_{1}  e_{1}=e_{1}$ &
 $e_{1} e_{2}=e_{2}+\lambda e_{3}$ &
 $e_{2} e_{1}=e_{2}+\lambda e_{3}$&
 & &
 \\

 &
 $e_{3} e_{2}=e_{2}$ & 
 $e_{3} e_{3}=e_{3}$&
 &
 & &
 \\ \hline

{$\mathrm{ N}_{11}^{\lambda \neq 0}$}  &
 $e_{1}  e_{1}=e_{1}$ &
 $e_{2} e_{3}=\lambda e_{2}$ &
 $e_{3} e_{2}=(\lambda +1) e_{2}$&
 & &
 \\

 &
 $e_{3} e_{3}=\lambda e_{3}$ & 
 &
 &
 & &
 \\ \hline

$\mathrm{ N}_{12}$  &
 $e_{1}  e_{1}=e_{1}$ &
 $e_{2} e_{3}=-e_{2}$ &
 $e_{3} e_{3}=e_{2}-e_{3}$&
 & &
 \\ \hline

$\mathrm{ N}_{13}$  &
 $e_{1}  e_{1}=e_{1}+e_{2}$ &
 $e_{2} e_{3}=-e_{2}$ &
 $e_{3} e_{3}=-e_{3}$&
 & &
 \\ \hline

$\mathrm{ N}_{14}$  &
 $e_{1}  e_{1}=e_{1}+e_{2}$ &
 $e_{2} e_{3}=-e_{2}$ &
 $e_{3} e_{3}=e_{2}-e_{3}$&
 & &
 \\ \hline

{$\mathrm{ N}_{15}$}  &
 $e_{1}  e_{1}=e_{1}$ &
 $e_{2} e_{2}=e_{3}$ &
 $e_{3} e_{2}=e_{2}$&
 & &
 \\

 &
 $e_{3} e_{3}=2e_{3}$ & 
 &
 &
 & &
 \\ \hline

{$\mathrm{ N}_{16}$}  &
 $e_{1}  e_{1}=e_{1}$ &
 $e_{1} e_{2}=e_{2}$ &
 $e_{1} e_{3}=e_{3}$&
 & &
 \\

 &
 $e_{2} e_{1}=e_{2}$ & 
 $e_{3} e_{1}=e_{3}$&
 $e_{3} e_{2}=e_{2}$&
 & &
 \\

 &
 $e_{3} e_{3}=e_{2}+e_{3}$ & 
 &
 &
 & &
 \\ \hline

$\mathrm{ N}_{17}$  &
 $e_{1}  e_{1}=e_{2}$ &
 $e_{2} e_{3}=-e_{2}$ &
 $e_{3} e_{3}=e_{2}-e_{3}$&
 & &
 \\ \hline

{$\mathrm{ N}_{18}^{\lambda }$}  &
 $e_{1}  e_{1}=e_{1}+e_{2}$ &
 $e_{1} e_{2}=e_{2}$ &
 $e_{1} e_{3}=e_{2}+e_{3}$&
 & &
\\

 &
 $e_{2} e_{1}=e_{2}$ & 
 $e_{3} e_{1}=e_{2}+e_{3}$&
 &
 & &
 \\

  &
 $e_{3} e_{2}=e_{2}$ & 
 $e_{3} e_{3}=-\lambda e_{2}+e_{3}$&
 &
 & &
 \\ \hline

 {$\mathrm{ N}_{19}$}  &
 $e_{1}  e_{3}=-e_{1}+e_{2}$ &
 $e_{2} e_{3}=-e_{2}$ &
 $e_{3} e_{1}=-e_{1}+e_{2}$&
 & &
 \\

 &
 $e_{3} e_{3}=e_{2}-e_{3}$ & 
 &
 &
 & &
 \\ \hline

 {$\mathrm{ N}_{20}^{\lambda}$}  &
 $e_{1}  e_{1}=e_{1}+{\lambda}e_{3}$ &
 $e_{1} e_{2}=e_{2}$ &
 $e_{1} e_{3}=e_{3}$&
 & &
 \\

 &
 $e_{2} e_{1}=e_{2}$ & 
 $e_{3} e_{1}=e_{3}$&
 $e_{3} e_{2}=e_{2}$&
 & &
 \\ \hline

{$\mathrm{ N}_{21}$}  &
 $e_{1}  e_{1}=e_{1}$ &
 $e_{1} e_{2}=e_{2}$ &
 $e_{1} e_{3}=e_{3}$&
 & &
 \\

 &
 $e_{2} e_{1}=e_{2}$ & 
 $e_{2} e_{2}=e_{3}$&
 $e_{3} e_{1}=e_{3}$&
 & &
 \\

 &
 $e_{3} e_{2}=e_{2}$ & 
 $e_{3} e_{3}=2e_{3}$&
 &
 & &
 \\ \hline

\multicolumn{7}{c}{\textbf{The left-symmetric algebras on $\rm{D}_{\ell}$:}} \\
\hline
\multicolumn{7}{c}{$ {\ell}\neq -1$} \\
\hline
$\mathrm{ {D}_{1}^{\ell, \lambda }}$  &
 $e_{3}  e_{1}=e_{1}$ &
 $e_{3} e_{2}=\ell e_{2}$ &
 $e_{3} e_{3}=\lambda e_{3}$&
 & &
 \\ \hline
$\mathrm{ {D}_{2}^{\ell}}$  &
 $e_{3}  e_{1}=e_{1}$ &
 $e_{3} e_{2}=\ell e_{2}$ &
 $e_{3} e_{3}=e_{2}+\ell e_{3}$&
 & &
 \\ \hline
 {$\mathrm{ {D}_{3}^{\ell, \lambda \neq 0}}$}  &
 $e_{2}  e_{3}=\lambda e_{2}$ &
 $e_{3} e_{1}=e_{1}$&
 $e_{3} e_{2}=(\lambda +\ell)e_{2}$ &
 & &
 \\

 &
 $e_{3} e_{3}=\lambda e_{3}$&
 &
 &
 & &
 \\ \hline
$\mathrm{ {D}_{4}^{\ell}}$
&
 $e_{2}  e_{3}=-\ell e_{2}$ &
 $e_{3} e_{1}=e_{1}$ &
 $e_{3} e_{3}=e_{2} -\ell e_{3}$&
 & &
 \\ \hline
{$\mathrm{ {D}_{5}^{\ell}}$}  &
 $e_{2}  e_{2}=e_{3}$ &
 $e_{3} e_{1}=e_{1}$&
 $e_{3} e_{2}=\ell e_{2}$ &
 & &
 \\

 &
 $e_{3} e_{3}=2\ell e_{3}$&
 &
 &
 & &
 \\ \hline
{$\mathrm{ {D}_{6}^{\ell}}$}  &
 $e_{2}  e_{3}=e_{1}$ &
 $e_{3} e_{1}=e_{1}$&
 $e_{3} e_{2}=e_{1}+\ell e_{2}$ &
 & &
 \\

 &
 $e_{3} e_{3}=(1-\ell)e_{3}$&
 &
 &
 & &
 \\ \hline

 {$\mathrm{ {D}_{7}^{\ell}}$}  &
 $e_{2}  e_{3}=e_{2}$ &
 $e_{3} e_{1}=e_{1}$&
 $e_{3} e_{2}=(1+\ell)e_{2}$ &
 & &
 \\

 &
 $e_{3} e_{3}=e_{1}+e_{3}$&
 &
 &
 & &
 \\ \hline
 {$\mathrm{ {D}_{8}^{\ell}}$}  &
 $e_{1}  e_{1}=e_{2}$ &
 $e_{2} e_{3}=(2-\ell)e_{2}$&
 $e_{3} e_{1}=e_{1}$ &
 & &
 \\

 &
 $e_{3} e_{2}=2e_{2}$&
 $e_{3} e_{3}=(2-\ell)e_{3}$&
 &
 & &
 \\ \hline
{$\mathrm{ {D}_{9}^{\ell}}$}  &
 $e_{1}  e_{2}=e_{3}$ &
 $e_{2} e_{1}=e_{3}$&
 $e_{3} e_{1}=e_{1}$ &
 & &
 \\

 &
 $e_{3} e_{2}=\ell e_{2}$&
 $e_{3} e_{3}=(\ell+1)e_{3}$&
 &
 & &
 \\ \hline
{$\mathrm{ {D}}_{10}^{\ell,\lambda \neq 0}$}  &
 $e_{1}  e_{3}=\lambda e_{1}$ &
 $e_{2} e_{3}=\lambda e_{2}$&
 $e_{3} e_{1}=(\lambda +1)e_{1}$ &
 & &
 \\

 &
 $e_{3} e_{2}=(\lambda+\ell)e_{2}$&
 $e_{3} e_{3}=\lambda e_{3}$&
 &
 & &
 \\ \hline

{$\mathrm{ {D}_{11}^{\ell}}$}  &
 $e_{1}  e_{3}=-e_{1}$ &
 $e_{2} e_{3}=-e_{2}$&
 $e_{3} e_{2}=(\ell-1)e_{2}$ &
 & &
 \\

 &
 $e_{3} e_{3}=e_{1}-e_{3}$&
 &
 &
 & &
 \\ \hline
{$\mathrm{ {D}_{12}}$}  &
 $e_{2}  e_{2}=e_{1}$ &
 $e_{2} e_{3}=(1-2\ell)e_{2}$&
 $e_{3} e_{1}=e_{1}$ &
 & &
 \\

 &
 $e_{3} e_{2}=(1-\ell)e_{2}$ &
 $e_{3} e_{3}=(1-2\ell)e_{3}$&
 &
 & &
 \\ \hline

\multicolumn{7}{c}{${
|\ell|}<1$} \\
\hline

$\mathrm{ {D}_{13}^{\ell}}$  &
 $e_{3}  e_{1}=e_{1}$ &
 $e_{3} e_{2}=\ell e_{2}$ &
 $e_{3} e_{3}= e_{1}+e_{3}$&
 & &
 \\ \hline
{$\mathrm{ {D}_{14}^{\ell}}$}  &
 $e_{1}  e_{3}=(\ell-1)e_{1}$ &
 $e_{2} e_{3}=e_{1}+(\ell-1)e_{2}$&
 $e_{3} e_{1}=\ell e_{1}$ &
 & &
 \\

 &
 $e_{3} e_{2}=e_{1}+(2\ell-1)e_{2}$&
 $e_{3} e_{3}=(\ell-1)e_{3}$&
 &
 & &
 \\ \hline

{$\mathrm{ {D}_{15}^{\ell}}$}  &
 $e_{1}  e_{3}=(1-\ell)e_{1}+e_{2}$ &
 $e_{2} e_{3}=(1-\ell)e_{2}$&
 $e_{3} e_{1}=(2-\ell)e_{1}+e_{2}$ &
 & &
 \\

 &
 $e_{3} e_{2}=e_{2}$&
 $e_{3} e_{3}=(1-\ell)e_{3}$&
 &
 & &
 \\ \hline
{$\mathrm{ {D}_{16}^{\ell,\lambda \neq 0}}$}  &
 $e_{1}  e_{3}=\lambda e_{1}$ &
 $e_{3} e_{1}=(\lambda+1)e_{1}$&
 $e_{3} e_{2}=\ell e_{2}$ &
 & &
 \\

 &
 $e_{3} e_{3}=\lambda e_{3}$&
 &
 &
 & &
 \\ \hline

$\mathrm{ {D}_{17}^{\ell}}$  &
 $e_{1}  e_{3}=- e_{1}$ &
 $e_{3} e_{2}=\ell e_{2}$ &
 $e_{3} e_{3}=e_{1}-e_{3}$&
 & &
 \\ \hline

{$\mathrm{ {D}_{18}^{\ell}}$}  &
 $e_{1}  e_{1}= e_{3}$ &
 $e_{3} e_{1}=e_{1}$&
 $e_{3} e_{2}=\ell e_{2}$ &
 & &
 \\

 &
 $e_{3} e_{3}=2e_{3}$&
 &
 &
 & &
 \\ \hline
{$\mathrm{ {D}_{19}^{\ell}}$}  &
 $e_{1}  e_{3}= e_{2}$ &
 $e_{3} e_{1}=e_{1}+e_{2}$&
 $e_{3} e_{2}=\ell e_{2}$ &
 & &
 \\

 &
 $e_{3} e_{3}=(\ell-1)e_{3}$&
 &
 &
 & &
 \\ \hline
{$\mathrm{ {D}_{20}^{\ell}}$}  &
 $e_{1}  e_{1}= e_{2}$ &
 $e_{1} e_{3}=(\ell-2)e_{1}$&
 $e_{3} e_{1}=(\ell-1)e_{1}$ &
 & &
 \\

 &
 $e_{3} e_{2}=\ell e_{2}$&
 $e_{3} e_{3}=(\ell-2)e_{3}$&
 &
 & &
 \\ \hline

{$\mathrm{ {D}_{21}}$}  &
 $e_{1}  e_{3}=(2\ell-1)e_{1}$ &
 $e_{2} e_{2}=e_{1}$&
 $e_{3} e_{1}=2\ell e_{1}$ &
 & &
 \\

 &
 $e_{3} e_{2}=\ell e_{2}$ &
 $e_{3} e_{3}=(2\ell-1)$&
 &
 & &
 \\ \hline

\multicolumn{7}{c}{$0< |{\ell}|<1$} \\
\hline

{$\mathrm{ {D}_{22}^{\lambda}}$}  &
 $e_{2}  e_{2}=e_{1}$ &
 $e_{3} e_{1}=e_{1}$&
 $e_{3} e_{2}=\frac{1}{2}e_{2}$ &
 & &
 \\

 &
 $e_{3} e_{3}=\lambda e_{3}$&
 &
 &
 & &
 \\ \hline
{$\mathrm{ {D}_{23}}$}  &
 $e_{2}  e_{2}=e_{1}$ &
 $e_{3} e_{1}=e_{1}$&
 $e_{3} e_{2}=\frac{1}{2}e_{2}$ &
 & &
 \\

 &
 $e_{3} e_{3}= e_{1}+e_{3}$&
 &
 &
 & &
 \\ \hline

$\mathrm{ {D}_{24}}$  &
 $e_{2}  e_{3}=e_{2}$ &
 $e_{3} e_{1}=e_{1}$ &
 $e_{3} e_{3}=e_{1}+e_{2}+e_{3}$&
 & &
 \\ \hline
{$\mathrm{ {D}_{25}}$}  &
 $e_{2}  e_{2}=e_{1}$ &
 $e_{3} e_{1}=e_{1}$&
 $e_{3} e_{2}=e_{1}+\frac{1}{2}e_{2}$ &
 & &
 \\

 &
 $e_{3} e_{3}=e_{2}+\frac{1}{2}e_{3}$&
 &
 &
 & &
 \\ \hline

{$\mathrm{ {D}_{26}}$}  &
 $e_{2}  e_{2}=e_{3}$ &
 $e_{2} e_{3}=e_{1}$&
 $e_{3} e_{1}=e_{1}$ &
 & &
 \\

 &
 $e_{3} e_{2}=e_{1}+\frac{1}{3}e_{2}$ &
 $e_{3} e_{3}=\frac{2}{3}e_{3}$&
 &
 & &
 \\ \hline

{$\mathrm{ {D}_{27}^{\ell}}$}  &
 $e_{1}  e_{3}=-\ell e_{1}$ &
 $e_{2} e_{3}=-\ell e_{2}$&
 $e_{3} e_{1}=(1-\ell)e_{1}$ &
 & &
 \\

 &
 $e_{3} e_{3}=e_{2}-\ell e_{3}$&
 &
 &
 & &
 \\ \hline

{$\mathrm{ {D}_{28}}$}  &
 $e_{1}  e_{3}=-\frac{1}{2}e_{1}$ &
 $e_{2} e_{3}=e_{1}-\frac{1}{2}e_{2}$&
 $e_{3} e_{1}=\frac{1}{2}e_{1}$ &
 & &
 \\

 &
 $e_{3} e_{2}=e_{1}$ &
 $e_{3} e_{3}=e_{2}-\frac{1}{2}e_{3}$&
 &
 & &
 \\ \hline
{$\mathrm{ {D}_{29}^{\lambda\neq0}}$}  &
 $e_{1}  e_{3}=\lambda e_{1}$ &
 $e_{2} e_{2}=e_{1}$&
 $e_{2} e_{3}=\lambda e_{2}$ &
 & &
 \\

 &
 $e_{3} e_{1}=(\lambda+1)e_{1}$ &
 $e_{3} e_{2}=(\lambda+\frac{1}{2})e_{2}$&
 $e_{3} e_{3}=\lambda e_{3}$&
 & &
 \\ \hline

{$\mathrm{ {D}_{30}}$}  &
 $e_{1}  e_{3}=-e_{1}$ &
 $e_{2} e_{2}=e_{1}$&
 $e_{2} e_{3}=-e_{2}$ &
 & &
 \\

 &
 $e_{3} e_{2}=-\frac{1}{2}e_{2}$&
 $e_{3} e_{3}=e_{1}-e_{3}$&
 &
 & &
 \\ \hline

{$\mathrm{ {D}_{31}}$}  &
 $e_{1}  e_{2}=e_{3}$ &
 $e_{2} e_{1}=e_{3}$&
 $e_{2} e_{2}=e_{1}$ &
 & &
 \\

 &
 $e_{3} e_{1}=e_{1}$&
 $e_{3} e_{2}=\frac{1}{2}e_{2}$&
 $e_{3} e_{3}=\frac{3}{2}e_{3}$&
 & &
 \\ \hline

{$\mathrm{ {D}_{32}^{\ell}}$}  &
 $e_{1}  e_{3}= \ell e_{1}$ &
 $e_{3} e_{1}=(1+\ell)e_{1}$&
 $e_{3} e_{2}=\ell e_{2}$ &
 & &
 \\

 &
 $e_{3} e_{3}=e_{2}+\ell e_{3}$&
 &
 &
 & &
 \\ \hline

\multicolumn{7}{c}{\textbf{The left-symmetric algebras on $\rm E$:}} \\
\hline

$\mathrm{ E_{1}^{\lambda }}$  &
 $e_{3}  e_{1}=e_{1}$ &
 $e_{3} e_{2}=e_{1}+e_{2}$ &
 $e_{3} e_{3}=\lambda e_{3}$&
 & &
 \\ \hline

$\mathrm{ {E}_{2}}$  &
 $e_{3}  e_{1}=e_{1}$ &
 $e_{3} e_{2}=e_{1}+e_{2}$ &
 $e_{3} e_{3}=e_{2}+e_{3}$&
 & &
 \\ \hline

$\mathrm{ {E}_{3}^{\lambda\neq0}}$  &
 $e_{2}  e_{3}=\lambda e_{1}$ &
 $e_{3} e_{1}=e_{1}$ &
 $e_{3} e_{2}=(\lambda+1)e_{1}+e_{2}$&
 & &
 \\ \hline

{$\mathrm{ {E}_{4}}$}  &
 $e_{2}  e_{2}=e_{3}$ &
 $e_{3} e_{1}=e_{1}$&
 $e_{3} e_{2}=e_{1}+e_{2}$ &
 & &
\\

&
 $e_{3} e_{3}=2e_{3}$ & 
 &
 &
 & &
 \\ \hline

{$\mathrm{ {E}_{5}}$}  &
 $e_{2}  e_{2}=e_{1}$ &
 $e_{2} e_{3}=-e_{1}-e_{2}$&
 $e_{3} e_{1}=e_{1}$ &
 & &
\\

&
 $e_{3} e_{3}=-e_{2}-e_{3}$ & 
 &
 &
 & &
 \\ \hline

{$\mathrm{ {E}_{6}^{\lambda\neq0}}$}  &
 $e_{1}  e_{3}=\lambda e_{1}$ &
 $e_{2} e_{3}=\lambda e_{2}$&
 $e_{3} e_{1}=(\lambda+1)e_{1}$ &
 & &
\\

&
 $e_{3} e_{2}=e_{1}+(\lambda+1)e_{2}$ & 
 $e_{3} e_{3}=\lambda e_{3}$&
 &
 & &
 \\ \hline

{$\mathrm{ {E}_{7}}$}  &
 $e_{1}  e_{3}=-e_{1}$ &
 $e_{2} e_{3}=-e_{2}$&
 $e_{3} e_{2}=e_{1}$ &
 & &
\\

&
 $e_{3} e_{3}=e_{2}-e_{3}$& 
 &
 &
 & &
 \\ \hline

{$\mathrm{{E}_{8}}$}  &
 $e_{1}  e_{3}=e_{1}$ &
 $e_{2} e_{2}=e_{1}$&
 $e_{3} e_{1}=2e_{1}$ &
 & &
\\

&
 $e_{3} e_{2}=e_{1}+e_{2}$ & 
 $e_{3} e_{3}=-e_{2}+e_{3}$ &
 &
 & &
 \\ \hline

\end{longtable}

\subsection{Geometric classification}

\begin{theorem}\label{thm4}

Let $\bf A_1$ and $\bf A_2$ be two $n$-dimensional left-symmetric algebras. Let $\bf L_1$ and $\bf L_2$
be the Lie algebras associated with $\bf A_1$ and $\bf A_2$, respectively.
If the algebra $\bf A_1$ degenerates to the algebra $\bf A_2$, then the Lie algebra $\bf L_1$ degenerates to the Lie algebra $\bf L_2$. 
\end{theorem}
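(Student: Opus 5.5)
The plan is to show that the map sending a left-symmetric structure to its commutator is a $\mathrm{GL}(V)$-equivariant morphism of affine varieties. Continuity of morphisms then carries orbit closures into orbit closures.

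Fix the underlying space $V$ with $\dim V=n$. Let $\mathcal{LS}_n\subset\operatorname{Hom}(V\otimes V,V)$ be the variety of left-symmetric structures and $\mathcal{L}_n$ the variety of Lie structures. Define
\[
\Phi:\mathcal{LS}_n\to \operatorname{Hom}(V\otimes V,V),\qquad \Phi(\mu)(x\otimes y)=\mu(x\otimes y)-\mu(y\otimes x).
\]
The first step is to check that $\Phi$ actually lands in $\mathcal{L}_n$. Antisymmetry is clear. For the Jacobi identity, rewrite the left-symmetric identity as symmetry of the associator in its first two arguments, $(x,y,z)=(y,x,z)$. Expanding $[[x,y],z]+[[y,z],x]+[[z,x],y]$ in terms of $\mu$ gives an alternating sum of associators, and these cancel pairwise by that symmetry. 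This is the standard fact that the commutator of a left-symmetric algebra is a Lie algebra, as already noted in the Introduction.

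Next, in structure constants $\Phi$ is linear: $c_{ij}^k\mapsto c_{ij}^k-c_{ji}^k$. So it is the restriction of a linear map on $\operatorname{Hom}(V\otimes V,V)$, and hence it is a morphism, continuous in the Zariski topology. It is also $\mathrm{GL}(V)$-equivariant. For $g\in \mathrm{GL}(V)$,
\[
(g*\Phi(\mu))(x\otimes y)=g\mu(g^{-1}x\otimes g^{-1}y)-g\mu(g^{-1}y\otimes g^{-1}x)=\Phi(g*\mu)(x\otimes y).
\]
It follows that $\Phi(\mathcal{O}(\mu))=\mathcal{O}(\Phi(\mu))$.

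Now let $\mu_1,\mu_2$ represent $\mathbf A_1,\mathbf A_2$, with $\mu_2\in\overline{\mathcal{O}(\mu_1)}$. By continuity,
\[
\Phi\bigl(\overline{\mathcal{O}(\mu_1)}\bigr)\subseteq\overline{\Phi(\mathcal{O}(\mu_1))}=\overline{\mathcal{O}(\Phi(\mu_1))}.
\]
Therefore $\Phi(\mu_2)\in\overline{\mathcal{O}(\Phi(\mu_1))}$. Since $\Phi(\mu_i)$ represents $\mathbf L_i$, this says exactly that $\mathbf L_1\to\mathbf L_2$.

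Nothing here is a genuine obstacle. The only point needing care is to work with the closure in the ambient space, or equivalently in $\mathcal{L}_n$, which is itself closed. Then the image of the closure stays inside the closure of the image of the orbit. An alternative argument uses a one-parameter family $g_t$ with $g_t*\mu_1\to\mu_2$; applying the linear map $\Phi$ gives $g_t*\Phi(\mu_1)\to\Phi(\mu_2)$ directly.
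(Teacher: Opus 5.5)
Your proof is correct and complete. The paper itself gives no proof of this statement. It is stated and then used as a tool to justify non-degenerations via the Burde--Steinhoff diagram, and the super-analogue for $\mathfrak{G}(S)$ is later dismissed as ``straightforward to see''. Your argument therefore supplies the justification the authors leave implicit, and it is the standard one: the commutator map $\Phi$ is the restriction of a linear map on structure constants, it is $\mathrm{GL}(V)$-equivariant, and it takes values in the closed Lie variety, so $\Phi\bigl(\overline{\mathcal{O}(\mu_1)}\bigr)\subseteq\overline{\mathcal{O}(\Phi(\mu_1))}$.

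Two small remarks. First, keep the closure argument as the main proof rather than the curve-based alternative. The latter presupposes that the degeneration is realized by a family $g_t$; that holds over $\mathbb{C}$, and it is how the paper exhibits its degenerations, but it is an extra fact you do not need. Second, the same argument also proves the superalgebra version $S_1\to S_2\Rightarrow\mathfrak{G}(S_1)\to\mathfrak{G}(S_2)$ that the paper invokes in Section 3. Take $G=\mathrm{GL}(V_0)\oplus\mathrm{GL}(V_1)$ and the super-commutator $xy-(-1)^{\alpha\beta}yx$. On a homogeneous basis this is still a fixed linear map on structure constants, and it is equivariant because even automorphisms preserve parity.
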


\begin{theorem}\label{thm5}
    The variety of $3$-dimensional left-symmetric algebras has dimension  $10$, and it has  $31$  irreducible components defined by
\begin{center}
        $\mathcal{C}_{1}=\overline{{\mathcal O}( \mathrm{ N}_{1}^{\lambda, \mu} )},$ 
        $\mathcal{C}_{2}=\overline{{\mathcal O}( \mathrm{ N}_{8}^{\lambda, \mu} )},$
        $\mathcal{C}_{3}=\overline{{\mathcal O}( \mathrm{ N}_{10}^{\lambda} )},$
        $\mathcal{C}_{4}=\overline{{\mathcal O}( \mathrm{ N}_{15} )},$
        $\mathcal{C}_{5}=\overline{{\mathcal O}( \mathrm{ N}_{21} )},$
        $\mathcal{C}_{6}=\overline{{\mathcal O}( \mathrm{ D}_{1}^{\ell,\lambda} )},$
        $\mathcal{C}_{7}=\overline{{\mathcal O}( \mathrm{ D}_{3}^{\ell,\lambda} )},$
        $\mathcal{C}_{8}=\overline{{\mathcal O}( \mathrm{ D}_{5}^{\ell} )},$
        $\mathcal{C}_{9}=\overline{{\mathcal O}( \mathrm{ D}_{6}^{\ell} )},$
        $\mathcal{C}_{10}=\overline{{\mathcal O}( \mathrm{ D}_{22}^{\lambda} )},$
        $\mathcal{C}_{11}=\overline{{\mathcal O}( \mathrm{ D}_{25} )},$
        $\mathcal{C}_{12}=\overline{{\mathcal O}( \mathrm{ D}_{12} )},$
        $\mathcal{C}_{13}=\overline{{\mathcal O}( \mathrm{ D}_{26} )},$
        $\mathcal{C}_{14}=\overline{{\mathcal O}( \mathrm{ D}_{8}^{\ell} )},$
        $\mathcal{C}_{15}=\overline{{\mathcal O}( \mathrm{ D}_{9}^{\ell} )},$
        $\mathcal{C}_{16}=\overline{{\mathcal O}( \mathrm{ D}_{21} )},$
        $\mathcal{C}_{17}=\overline{{\mathcal O}( \mathrm{ D}_{10}^{\ell,\lambda} )},$
        $\mathcal{C}_{18}=\overline{{\mathcal O}( \mathrm{ D}_{14}^{\ell} )},$
        $\mathcal{C}_{19}=\overline{{\mathcal O}( \mathrm{ D}_{15}^{\ell})},$
        $\mathcal{C}_{20}=\overline{{\mathcal O}( \mathrm{ D}_{29}^{\lambda} )},$
        $\mathcal{C}_{21}=\overline{{\mathcal O}( \mathrm{ D}_{31} )},$
        $\mathcal{C}_{22}=\overline{{\mathcal O}( \mathrm{ D}_{16}^{\ell,\lambda} )},$
        $\mathcal{C}_{23}=\overline{{\mathcal O}( \mathrm{ D}_{18}^{\ell} )},$
        $\mathcal{C}_{24}=\overline{{\mathcal O}( \mathrm{ D}_{19}^{\ell} )},$
        $\mathcal{C}_{25}=\overline{{\mathcal O}( \mathrm{ D}_{20}^{\ell})},$
        $\mathcal{C}_{26}=\overline{{\mathcal O}( \mathrm{ E}_{1}^{\lambda} )},$
        $\mathcal{C}_{27}=\overline{{\mathcal O}( \mathrm{ E}_{3}^{\lambda} )},$
        $\mathcal{C}_{28}=\overline{{\mathcal O}( \mathrm{ E}_{4} )},$
        $\mathcal{C}_{29}=\overline{{\mathcal O}( \mathrm{ E}_{5} )},$
        $\mathcal{C}_{30}=\overline{{\mathcal O}( \mathrm{ E}_{6}^{\lambda} )},$
        $\mathcal{C}_{31}=\overline{{\mathcal O}( \mathrm{ E}_{8} )}.$
\end{center}
In particular, there are  ten rigid algebras.
\end{theorem}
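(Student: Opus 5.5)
The plan is the standard one for geometric classifications. First, compute for every algebra (or parametric family) in the list the dimension of its orbit, $\dim \mathcal O(A)=9-\dim \mathfrak{Der}(A)$. For a one-parameter family $A^{\lambda}$ I take $\dim \overline{\bigcup_\lambda \mathcal O(A^\lambda)}=\dim\mathcal O(A^\lambda)+1$ for generic $\lambda$, and for the two-parameter families $\mathrm N_1^{\lambda,\mu},\mathrm N_8^{\lambda,\mu},\mathrm D_1^{\ell,\lambda},\mathrm D_3^{\ell,\lambda},\mathrm D_{10}^{\ell,\lambda},\mathrm D_{16}^{\ell,\lambda}$ I add $2$. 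The derivation algebras can be computed by solving the linear system $D(e_ie_j)=D(e_i)e_j+e_iD(e_j)$ on the multiplication tables. The value $10$ should come out as the maximum, attained for example by a two-parameter family whose algebras have $1$-dimensional derivation algebras. This step bounds $\dim$ and orders the candidates.

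Second, I show that every algebra not in the list $\mathcal C_1,\dots,\mathcal C_{31}$ lies in one of these closures. I do this by exhibiting explicit degenerations given by parametrized bases $E_i^t=\sum a_{ij}(t)e_j$, or, for families, by a curve $\lambda(t)$ together with a basis change. Theorem~\ref{thm4} restricts the search: a degeneration can only go along an arrow of the Lie diagram of Theorem~\ref{thm3} or stay over the same Lie algebra. So algebras over $\mathrm H$ and $\mathbb C^3$ are to be reached from those over $\mathrm N,\mathrm D_\ell,\mathrm E$. Algebras over $\mathrm D_{-1}$ must come from $\mathfrak{sl}_2$-type structures, which do not exist, or from the $\mathrm D_\ell$ families with $\ell\to-1$. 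Algebras over $\mathrm D_1$ are to be reached from $\mathrm E$. Algebras within the same Lie type are typically reached by letting a parameter tend to a boundary value. For instance, $\mathrm N_2^\lambda,\mathrm N_6^\lambda$ should arise from $\mathrm N_1^{\lambda,\mu}$ with $\mu\to0$ after rescaling $e_3$, and $\mathrm D_2,\mathrm D_4,\mathrm D_7,\mathrm D_{13},\mathrm D_{17},\dots$ should arise as nilpotent perturbation limits of $\mathrm D_1,\mathrm D_3,\dots$.

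Third, I prove that the $31$ listed closures are pairwise non-contained. Theorem~\ref{thm4} separates components lying over incomparable Lie algebras; in particular $\mathrm N$, $\mathrm D_\ell$ (fixed $\ell$ generic) and $\mathrm E$ do not degenerate to one another. A caveat is that $\ell$ varies in a family, so for $\mathrm D$-type closures one must compare whole one-parameter unions. Within the same Lie type I use orbit-dimension counts: a closure of dimension $d$ cannot contain one of dimension $\ge d$. Where dimensions do not decide, I use invariants preserved under degeneration, namely closed conditions stable under a Borel subgroup as in the Lemma above. Examples are conditions such as $A^2\subseteq\langle e_2,e_3\rangle$, $e_iA\subseteq \dots$, or relations among structure constants (trace conditions like $\operatorname{tr}L_x$ versus $\operatorname{tr}R_x$, commutativity of $A^2$, existence of a left/right identity-like element). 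Finally, the rigid algebras are the components given by a single orbit with no parameter: $\mathrm N_{15},\mathrm N_{21},\mathrm D_{25},\mathrm D_{12},\mathrm D_{26},\mathrm D_{21},\mathrm D_{31},\mathrm E_4,\mathrm E_5,\mathrm E_8$, which gives ten.

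The main obstacle is the non-degeneration step among the many $\mathrm D$-type components. There the Lie algebra depends on $\ell$, Theorem~\ref{thm4} gives little, and several families share the same dimension, for example $\mathrm D_8,\mathrm D_9,\mathrm D_{14},\mathrm D_{15},\mathrm D_{18},\mathrm D_{19},\mathrm D_{20}$. For these I would first try, for each pair, to find a Borel-stable closed set $\mathrm R$ defined by vanishing of specific structure constants in a suitable flag-adapted basis together with a polynomial relation on the eigenvalues of $L_{e_3}$ and $R_{e_3}$. This set should contain the candidate source family but not the target. The relations between the diagonal entries, such as $\alpha_{31}^1-\alpha_{13}^1=1$ combined with $\alpha_{33}^3=\alpha_{13}^1$, are invariant under the flag-preserving group and closed, so they should separate these families.
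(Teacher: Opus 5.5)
Your outline follows the paper's own strategy: orbit dimensions, explicit degenerations, and non-degenerations via Theorem~\ref{thm4}, Theorem~\ref{thm3} and Borel-stable invariants. The point you set aside as a ``caveat'' is not a technical nuisance, though; it is fatal. The paper's proof has the same hole when it says that all remaining non-degenerations follow from Theorems~\ref{thm4} and~\ref{thm3}.

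Those theorems only exclude $A\to B$ for single orbits. A $\mathrm{D}$-type component is the closure of a union over $\ell$. As $\ell\to 1$ (e.g.\ $\ell=e^{i\theta}$, $\theta\to 0$, which lies in the admissible range), a diagonalizable action of $e_3$ on $\langle e_1,e_2\rangle$ can converge to a Jordan block, i.e.\ to the Lie type $\mathrm{E}$. Concrete examples:
\begin{itemize}
\item In $\mathrm{D}_1^{\ell,\lambda}$, take $f_1=e_1$, $f_2=e_2+\frac{1}{1-\ell}e_1$, $f_3=e_3$. Then $f_3f_1=f_1$, $f_3f_2=f_1+\ell f_2$, $f_3f_3=\lambda f_3$, and all other products vanish. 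The limit $\ell\to1$ is exactly $\mathrm{E}_1^{\lambda}$.
\item The same substitution in $\mathrm{D}_{10}^{\ell,\lambda}$ gives $f_2f_3=\lambda f_2$ and $f_3f_2=f_1+(\lambda+\ell)f_2$, so the limit is $\mathrm{E}_6^{\lambda}$.
\item The same substitution in $\mathrm{D}_5^{\ell}$ gives $f_2f_2=f_3$, $f_3f_2=f_1+\ell f_2$, $f_3f_3=2\ell f_3$, so the limit is $\mathrm{E}_4$.
\item In $\mathrm{D}_6^{\ell}$, take $f_1=e_1$, $f_2=\lambda e_2+\frac{1}{1-\ell}e_1$, $f_3=e_3$. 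This gives $f_2f_3=\lambda f_1$, $f_3f_2=(\lambda+1)f_1+\ell f_2$, $f_3f_3=(1-\ell)f_3$, whose limit is $\mathrm{E}_3^{\lambda}$.
\end{itemize}
Hence $\mathcal{C}_{26}\subset\mathcal{C}_6$, $\mathcal{C}_{27}\subset\mathcal{C}_9$, $\mathcal{C}_{28}\subset\mathcal{C}_8$ and $\mathcal{C}_{30}\subset\mathcal{C}_{17}$, all proper since $8<9$. No Borel-stable invariant can separate these pairs. Your third step (pairwise non-containment) therefore cannot be completed, and the statement with $31$ components is not correct as written. $\mathrm{E}_5$ and $\mathrm{E}_8$ would also have to be tested against such $\ell\to1$ limits before being declared components.

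Your count of rigid algebras fails as well. You read rigidity off the absence of a superscript, but $\mathrm{D}_{12}$ and $\mathrm{D}_{21}$ carry $\ell$ in their multiplication tables. In both, $\mathrm{diag}(2,1,0)$ is a derivation, so each single orbit has dimension at most $8$. The closure dimension $9$ recorded for them therefore comes from varying $\ell$: they are one-parameter families whose orbits are never open. Together with $\mathrm{E}_4\in\mathcal{C}_8$ (orbit of dimension $8$ inside a $9$-dimensional irreducible closed set), at least three of your ten candidates are not rigid. A correct treatment must compute, for every $\mathrm{D}$-family, the boundary limits in $\ell$ where the Lie type changes, before any non-containment across Lie types is asserted.
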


\begin{proof}
The dimensions of orbit closures for the algebras in the theorem are as follows.
\begin{longtable}{lllllllllllllllll}
&&$\dim  {\mathcal O}( \mathrm{ N}_{1}^{\lambda, \mu})$&$=$&$\dim  {\mathcal O}(\mathrm{ N}_{8}^{\lambda, \mu})$&$=$&$\dim  {\mathcal O}(\mathrm{ N}_{10}^\lambda)$&$=$&$10,$\\

$\dim  {\mathcal O}(\mathrm{ N}_{15})$&$=$&$\dim  {\mathcal O}(\mathrm{ N}_{21})$&$=$&$\dim  {\mathcal O}( \mathrm{ D}_{1}^{\ell,\lambda})$&$=$&$\dim  {\mathcal O}( \mathrm{ D}_{3}^{\ell,\lambda})$&$=$&\\

$\dim  {\mathcal O}( \mathrm{ D}_{5}^{\ell})$&$=$&$\dim  {\mathcal O}( \mathrm{ D}_{6}^{\ell})$&$=$&$\dim  {\mathcal O}( \mathrm{ D}_{22}^{\lambda})$&$=$&$\dim  {\mathcal O}( \mathrm{ D}_{25})$&$=$&\\

$\dim  {\mathcal O}( \mathrm{ D}_{12})$&$=$&$\dim  {\mathcal O}( \mathrm{ D}_{26})$&$=$&$\dim  {\mathcal O}( \mathrm{ D}_{8}^{\ell})$&$=$&$\dim  {\mathcal O}( \mathrm{ D}_{9}^{\ell})$&$=$&\\

$\dim  {\mathcal O}( \mathrm{ D}_{21})$&$=$&$\dim  {\mathcal O}( \mathrm{ D}_{10}^{\ell, \lambda})$&$=$&$\dim  {\mathcal O}( \mathrm{ D}_{14}^{\ell})$&$=$&$\dim  {\mathcal O}( \mathrm{ D}_{15}^{\ell})$&$=$&\\

$\dim  {\mathcal O}( \mathrm{ D}_{29}^{\lambda})$&$=$&$\dim  {\mathcal O}( \mathrm{ D}_{31})$&$=$&$\dim  {\mathcal O}( \mathrm{ D}_{16}^{\ell, \lambda})$&$=$&$\dim  {\mathcal O}( \mathrm{ D}_{18}^{\ell})$&$=$&\\

& & & &$\dim  {\mathcal O}( \mathrm{ D}_{19}^{\ell})$&$=$&$\dim  {\mathcal O}( \mathrm{ D}_{20}^{\ell})$&$=$&$9,$\\

$\dim  {\mathcal O}( \mathrm{ E_1^\lambda})$&$=$&$\dim  {\mathcal O}( \mathrm{ E_3^\lambda})$&$=$&$\dim  {\mathcal O}( \mathrm{ E_4})$&$=$&$\dim  {\mathcal O}( \mathrm{ E_5})$&$=$&\\

& & & &$\dim  {\mathcal O}( \mathrm{ E_6^\lambda})$&$=$&$\dim  {\mathcal O}( \mathrm{ E_8})$&$=$&$8.$ 
\end{longtable}
We provide the proof by considering left-symmetric algebras on each Lie algebra separately. Then, in the end, we examine degenerations between all the remaining algebras.

\begin{itemize}
    \item For left symmetric algebras on the Lie algebra $\mathrm{ H}$ we list the following degenerations.

    \begin{longtable}{lcl|lcl}
$\mathrm{ H}_{2}$ & $\xrightarrow{\big(e_1,\ t e_2,\ t e_3 \big)}$ & $\mathrm{ H}_{1}$ &
$\mathrm{ H}_{2}$ & $\xrightarrow{\big(t^3e_1-\frac{t^3}{2}e_2+\frac{t^3}{4}e_3,\ t^4e_2,\ t^7e_3 \big)}$ & $\mathrm{ H}_{7}$ \\
$\mathrm{ H}_{3}$ & $\xrightarrow{\big(e_1,\ t e_2,\ t e_3 \big)}$ & $\mathrm{ H}_{4}$ &
$\mathrm{ H}_{2}$ & $\xrightarrow{\big(-t^2e_2, \ te_1-te_2,\ t^3e_3 \big)}$ & $\mathrm{ H}_{8}$ \\
$\mathrm{ H}_{2}$ & $\xrightarrow{\big(\frac{t^3}{1-t}e_2+\frac{t^3}{(t-1)^2}e_3,\ t e_1-\frac{t^2}{t-1} e_2,\ -\frac{t^4}{(t-1)^2} e_3 \big)}$ & $\mathrm{ H}_{5}$ &
$\mathrm{ H}_{2}$ & $\xrightarrow{\big(\frac{t^2}{1-\lambda }e_2,\ te_1 \ -\frac{\lambda  t}{\lambda -1}e_2-\frac{\lambda  t}{(\lambda -1)^2}e_3,\ \frac{t^3}{\lambda -1}e_3 \big)}$ & $\mathrm{ H}_{9}^\lambda $ \\
$\mathrm{ H}_{2}$ & $\xrightarrow{\big(te_1-\frac{t^2}{t-1}e_2,\ \frac{\lambda t}{1-t}e_2-\frac{\lambda(\lambda-t)}{(t-1)^2}e_3,\ \frac{\lambda t^2}{(t-1)^2}e_3\big)}$ & $\rm{H}_{6}^{\lambda}$ &
& & \\
\end{longtable}

    This leaves the algebras $\mathrm{ H}_2$ and $\mathrm{ H}_3$.

    \item For left symmetric algebras on the Lie algebra $\mathrm{ N}$ we have the following lists of degenerations and non-degenerations.
\begin{longtable}{lcl|lcl}
$\mathrm{ N}_{1}^{t,\frac{t(t-1)}{\lambda}}$ & $\xrightarrow{\big(e_1,\ e_2,\ e_3\big)}$ & $\mathrm{ N}_{2}^\lambda$ &
$\mathrm{ N}_{8}^{\frac{t^3-1}{\lambda t}, \frac{1}{t}}$ & $\xrightarrow{\big(t e_3,\ e_2, \ \lambda e_1+e_3\big)}$ & $\mathrm{ N}_{11}^{\lambda}$ \\
$\mathrm{ N}_{1}^{{t^2},1+t}$ & $\xrightarrow{\big( e_1,\ -t e_2,\ e_2+e_3\big)}$ & $\mathrm{ N}_{3}$ &
$\mathrm{ N}_{8}^{\frac{t^5+t+1}{t^2}, \frac{1}{t^2}}$ & $\xrightarrow{\big(t^2 e_3,\ t e_2, \ -e_1+e_2+(t+1)e_3\big)}$ & $\mathrm{ N}_{12}$ \\
$\mathrm{ N}_{1}^{\frac{-1}{t},\ 1+t }$ & $\xrightarrow{\big(t^2 e_1,\ te_2,\ e_2+e_3,\big)}$ & $\mathrm{ N}_{4}$ &
$\mathrm{ N}_{8}^{\frac{1}{t},\frac{1}{t}}$ & $\xrightarrow{\big(te_2+te_3,\ -t e_2, \ -e_1+e_2+e_3\big)}$ & $\mathrm{ N}_{13}$ \\
$\mathrm{ N}_{1}^{1+t,\ \frac{t(1+t)}{\lambda}}$ & $\xrightarrow{\big(e_1,\ e_2,\ e_3,\big)}$ & $\mathrm{ N}_{6}^{\lambda}$ &
$\mathrm{ N}_{8}^{-\frac{t+1}{t}, -\frac{1}{t}}$ & $\xrightarrow{\big(-te_2-te_3,\ t e_2, \ -e_1+e_2+(t+1)e_3\big)}$ & $\mathrm{ N}_{14}$ \\
$\mathrm{ N}_{8}^{t, \frac{t (t-1)}{\lambda}}$ & $\xrightarrow{\big(e_1,\ e_2, \ e_3\big)}$ & $\mathrm{ N}_{9}^{\lambda}$ &
$\mathrm{ N}_{8}^{1,1+t}$ & $\xrightarrow{\big(e_1+te_2+te_3,\ \frac{t}{1+t}e_3, \ e_2+e_3\big)}$ & $\mathrm{ N}_{16}$ \\
 & & & $\mathrm{ N}_{8}^{1+t, \frac{t(1+t)}{\lambda} }$ & $\xrightarrow{\big( e_1,\ e_2+te_3, \ e_3\big)}$ & $\mathrm{ N}_{20}^{\lambda}$ \\
\end{longtable}
    \begin{longtable}{lcl}
    
$\mathrm{ N}_{1}^{\frac{t (\lambda t+t+1)}{\lambda (t+1)},\ \frac{1}{t+1} }$ & $\xrightarrow{\big(e_1+\ \frac{t \left(\lambda^2 (t-1)+2 \lambda t+t\right)}{\lambda^2 (t-1)}e_2+te_3,\ \frac{t (t+1)}{\lambda} e_3,\ e_2+(1+t)e_3,\big)}$ & $\mathrm{ N}_{5}^{\lambda}$\\

$\mathrm{ N}_{1}^{-\frac{2}{3t^2+1},\ 3}$ & $\xrightarrow{\big(-te_2-te_3,\ 2t^2e_3,\ (1+3t^2)e_1+e_2+e_3,\big)}$ & $\mathrm{ N}_{7}$\\

$\mathrm{ N}_{8}^{\frac{t^4+2 t^3-t^2-2 t-1}{t^3-t-1}, \ 1+t}$ & $\xrightarrow{\big(\sqrt{t} e_2+\frac{t}{-t^3+t+1} e_3,\ \frac{t^3}{\left(-t^3+t+1\right)^2} e_3, \ -e_1+\frac{t+1}{\sqrt{t}}e_2+e_3\big)}$ & $\mathrm{ N}_{17}$\\

$\mathrm{ N}_{10}^{\frac{\lambda t+t-1}{\lambda t-1}, \frac{1}{1-\lambda t}}$ & $\xrightarrow{\big( e_1+te_2+te_3,\ t(\lambda t-1)e_3, \ e_2+(1-\lambda t)e_3\big)}$ & $\mathrm{ N}_{18}^{\lambda}$\\

$\mathrm{ N}_{8}^{\mu +\mu  t+1, \mu}$ & $\xrightarrow{\big( te_2+te_3,\ te_2, \ -e_1+e_2+(1+t)e_3\big)}$ & $\mathrm{ N}_{19}$\\
\end{longtable}
    \begin{longtable}{lcl|l}

$\mathrm{ N}_{10}^\lambda$ & $\not \rightarrow  $ & 
$\begin{array}{l}
 \mathrm{ N}_{15}, \mathrm{ N}_{21}
\end{array}$
& 
$\left\{\begin{array}{l}
A_1A_2+A_2A_1\subset A_2\\
c_{22}^3=0, \ c_{22}^2=c_{23}^3, \ c_{32}^3=0 
\end{array}\right.$ \\ 

$\mathrm{ N}_{1}^{\lambda,\mu}$ & $\not \rightarrow  $ & 
$\begin{array}{l}
 \mathrm{ N}_{15}, \mathrm{ N}_{21} 
\end{array}$
& 
$\left\{\begin{array}{l}
c_{33}^1=c_{21}^1=c_{12}^1=c_{22}^1=c_{13}^1=\\
c_{23}^2=c_{33}^2=c_{31}^1=c_{13}^2=0,\\
c_{22}^3 (c_{32}^2)^2=(c_{22}^2)^2(c_{33}^3-c_{32}^2)\\
c_{21}^2c_{32}^2=c_{22}^2c_{31}^2
\end{array}\right.$ \\ 

$\mathrm{ N}_{8}^{\lambda,\mu}$ & $\not \rightarrow  $ & 
$\begin{array}{l}
 \mathrm{ N}_{15}, \mathrm{ N}_{21} 
\end{array}$
& 
$\left\{\begin{array}{l}
c_{33}^1=c_{21}^1=c_{12}^1=c_{22}^1=c_{13}^1=\\
c_{23}^2=c_{33}^2=c_{31}^1=c_{13}^2=0,\\
c_{22}^3 (c_{32}^2)^2=(c_{22}^2)^2(c_{33}^3-c_{32}^2)\\
c_{21}^2c_{32}^2=c_{22}^2c_{31}^2+c_{11}^1c_{32}^2\\
c_{32}^3c_{32}^2=c_{22}^2(c_{33}^3-c_{32}^2)
\end{array}\right.$ \\ 

\end{longtable}

    This leaves the algebras $\mathrm{ N}_1^{\lambda, \mu}, \ \mathrm{ N}_9^{\lambda, \mu}, \ \mathrm{ N}_{10}^{\lambda}, \ \mathrm{ N}_{15}$ and $\mathrm{ N}_{21}$.

    \item For left symmetric algebras on the Lie algebra $\mathrm{ D}_\ell$ we have the following lists of degenerations and non-degenerations.
{    \setlength{\LTleft}{0.8cm}
\setlength{\LTright}{0pt}
\begin{longtable}{lcl|lcl}
$\mathrm{ {D}_{1}^{\ell,l + t}}$                & $\xrightarrow{\big(e_1,\ te_2,\ -e_2+e_3 \big)}$ & $\mathrm{ {D}_{2}^{\ell}}$ &
$\mathrm{ {D}_{ 16}^{\frac{\ell}{t+1},-\frac{t+2}{2 t+2}}}$ & $\xrightarrow{\big(-\frac{1}{2} t (t+1) e_3,\ e_2,\ e_1+te_2+(t+1)e_3 \big)}$ & $\mathrm{ {D}_{17}^{\ell}}$ \\

$\mathrm{ {D}_{1}^{\ell,1+t}}$                  & $\xrightarrow{\big(-t e_1,\ e_2,\ e_1+t e_2+e_3 \big)}$ & $\mathrm{  {D}_{13}^{\ell}}$ &
$\mathrm{ {D}_{16}^{\ell+t, \ell}}$              & $\xrightarrow{\big(e_1+t^2 e_2,\ -t e_3,\ e_2+e_3 \big)}$ & $\mathrm{ {D}_{32}^{\ell}}$ \\

$\mathrm{ {D}_{3}^{\ell,t-1}}$                  & $\xrightarrow{\big(e_1,\ t e_2,\ e_2+e_3\big)}$ & $\mathrm{ {D}_{4}^{\ell}}$ &
$\mathrm{ {D}_{22}^{t+1}}$         & $\xrightarrow{\big(t e_1,\ \sqrt{t} e_2,\ -e_1+e_3 \big)}$ & $\mathrm{ {D}_{21}}$ \\

$\mathrm{ {D}_{3}^{\ell,t+1}}$                  & $\xrightarrow{\big(t e_1,\ e_2,\ -e_1+e_3 \big)}$ & $\mathrm{ {D}_{7}^{\ell}}$ &
$\mathrm{ {D}_{7}^{t-1}}$                  & $\xrightarrow{\big(e_1,\ t e_2,\ e_2+e_3 \big)}$ & $\mathrm{ {D}_{24}}$ \\

$\mathrm{ {D}_{10}^{\frac{1+\ell t}{\ell },-\frac{1}{\ell}}}$ & $\xrightarrow{\big(-\ell^2 t e_3,\ e_1,\ e_2+\ell e_3 \big)}$ & $\mathrm{ {D}_{11}^{\ell}}$ &
$\mathrm{ {D}_{29}^{-\frac{1}{2 (t+1)}}}$ & $\xrightarrow{\big(\frac{t}{2} e_1,\ \frac{t}{2} e_2,\ -\frac{2}{2 t+1} e_1+e_2+(t+1)e_3 \big)}$ & $\mathrm{ {D}_{28}}$ \\

$\mathrm{ {D}_{10}^{\frac{\ell}{t+1},-\ell}}$    & $\xrightarrow{\big(e_1,\ \frac{\ell t}{t+1} e_3,\ -e_2+e_3 \big)}$ & $\mathrm{ {D}_{27}^{\ell}}$ &
$\mathrm{ {D}_{29}^{-1-t^2}}$      & $\xrightarrow{\big(t^2 e_3,\ t e_2,\ -e_1+e_3 \big)}$ & $\mathrm{ {D}_{30}}$ \\
\end{longtable}}

         Thus, we are left with the algebras $\mathrm{ {D}_{1}^{\ell, \lambda}}$, $\mathrm{ {D}_{3}^{\ell,\lambda}}$, $\mathrm{ {D}_{5}^{\ell}}$, $\mathrm{ {D}_{6}^{\ell}}$, $\mathrm{ {D}_{22}^{\lambda}}$, $\mathrm{ {D}_{25}}$, $\mathrm{ {D}_{12}}$, $\mathrm{ {D}_{26}}$, $\mathrm{ {D}_{8}^{\ell}}$, $\mathrm{ {D}_{9}^{\ell}}$, $\mathrm{ {D}_{21}}$, $\mathrm{ {D}_{10}^{\ell, \lambda}}$, $\mathrm{ {D}_{14}^{\ell}}$, $\mathrm{ {D}_{15}^{\ell}}$, $\mathrm{ {D}_{29}^{\lambda}}$, $\mathrm{ {D}_{31}}$, $\mathrm{ {D}_{16}^{\ell,\lambda}}$, $\mathrm{ {D}_{18}^{\ell}}$, $\mathrm{ {D}_{19}^{\ell}}$, and $\mathrm{ {D}_{20}^{\ell}}$.
    \item Lastly, for left symmetric algebras on the Lie algebra $\mathrm{ E}$ we have the following list of degenerations.

    \begin{longtable}{lcl}

$\mathrm{ {E}_{1}^{t+1}}$ & $\xrightarrow{\big(te_1,\ te_2,\ -\frac{t^2+1}{t}e_1-e_2+e_3 \big)}$ & $\mathrm{ {E}_{2}}$\\

$\mathrm{ {E}_{6}^{t-1}}$ & $\xrightarrow{\big(\frac{t^2}{t^2-1}e_1,\ \frac{t^2}{t^2-1}e_2,\ e_1+\frac{t}{t^2-1}e_2+e_3 \big)}$ & $\mathrm{ {E}_{7}}$\\

\end{longtable}
    Hence, the remaining algebras in this case are $\mathrm{ {E_1^\lambda}}$, $\mathrm{ {E_3^\lambda}}$, $\mathrm{ {E_4}}$, $\mathrm{ {E_5}}$, $\mathrm{ {E_6^\lambda}}$, and $\mathrm{ {E_8}}$.
\end{itemize}

Further, we show the following degenerations.
\begin{longtable}{lcl}

$\mathrm{ {N}_{9}^{-\frac{1}{t^4+t-1}-t,  -t}}$ & $\xrightarrow{\big((-t^4-t+1)e_1+e_2+(t^4+t) e_3,\ \frac{1}{t^3} e_2+t e_3,\ -t^2 e_3 \big)}$ & $\mathrm{ {H}_{2}}$\\

$\mathrm{ {N}_{9}^{1, \frac{1}{t}}}$ & $\xrightarrow{\big( te_3,\ te_1+e_2,\ te_2 \big)}$ & $\mathrm{ {H}_{3}}$\\

\end{longtable}

All other non-degenerations are justified by Theorem \ref{thm4} and Theorem \ref{thm3}.
\end{proof}

\section{Left-symmetric superalgebras}

\subsection{Algebraic classification}
\begin{definition}
    A superalgebra $S$ is called a left-symmetric superalgebra if the the following identity holds
$$(x  y)  z - x  (y  z) = (-1)^{\alpha \beta}\Big((y  x)  z - y  (x  z)\Big), \ \ \forall x \in S_{\bar{\alpha}}, y \in S_{\bar{\beta}}, \ z \in S, \alpha, \beta \in \mathbb{Z}_2.$$
\end{definition}

Let \( S = S_0 \oplus S_1 \) be a left-symmetric superalgebra (LSSA). Then its even part \( S_0 \) forms a left-symmetric algebra; that is, the associator satisfies
\[
(x,y,z) = (y,x,z), \quad \text{for all } x,y,z \in S_0.
\]
It is well known that the super-commutator

$$[x,y] = x  y - (-1)^{\alpha \beta} y  x, \quad \forall x \in S_\alpha,\; y \in S_\beta,\; \alpha,\beta \in \mathbf{Z}_2$$
endows \( S \) with the structure of a Lie superalgebra, denoted by \( \mathfrak{G}(S) \). This Lie superalgebra is called the sub-adjacent Lie superalgebra of \( S \), and \( S \) is referred to as a compatible LSSA structure on \( \mathfrak{G}(S) \). Therefore, it is natural to study compatible LSSA structures on well-known Lie superalgebras. We present the theorems used in \cite{ZhangBai2012}.

\begin{theorem}
Let $G=G_{\bar{0}} \oplus G_{\bar{1}}$ be a 2-dimensional Lie superalgebra with a homogeneous basis $\{x, y\}$, where $e \in G_{\bar{0}}$ and $f \in G_{\bar{1}}$. Then $G$ is isomorphic to one 
of the mutually non-isomorphic Lie superalgebras given as follows:

\[
\begin{array}{ll}
(B_1): & [e, f] = f ;\\[4pt]
(B_2): & [f, f] = f ;\\[4pt]
(B_3): & \text{trivial Lie superalgebra.}
\end{array}
\]

\end{theorem}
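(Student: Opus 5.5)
The plan is a direct computation: write down the most general bracket allowed by the grading and supersymmetry, impose the super Jacobi identity, and normalize by rescaling basis vectors. One remark first: since $[G_{\bar 1},G_{\bar 1}]\subseteq G_{\bar 0}$, the relation in $(B_2)$ must be read as $[f,f]=e$. Writing $[f,f]=f$ is only consistent if the grading is ignored. I will prove the statement in this corrected form, with homogeneous basis $\{e,f\}$, $e\in G_{\bar 0}$, $f\in G_{\bar 1}$.

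First I determine the general bracket. Super-skew-symmetry gives $[e,e]=-[e,e]$, so $[e,e]=0$. For odd elements it gives $[f,f]=+[f,f]$, so no constraint arises there. The grading forces $[e,f]\in G_{\bar 1}$ and $[f,f]\in G_{\bar 0}$. Hence
\[
[e,f]=a f,\qquad [f,e]=-a f,\qquad [f,f]=b e
\]
for some $a,b\in\mathbb{C}$.

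Next I impose the super Jacobi identity, written in derivation form $[x,[y,z]]=[[x,y],z]+(-1)^{|x||y|}[y,[x,z]]$.
\begin{itemize}
\item Taking $x=e$, $y=z=f$ gives $[e,be]=[af,f]+[f,af]$, that is, $0=2ab\,e$. Hence $ab=0$.
\item Taking $x=y=z=f$ gives $[f,be]=[be,f]-[f,be]$, that is, $-abf=abf-(-abf)$. This again reduces to $ab=0$.
\item Triples containing two copies of $e$ hold trivially, because $[e,e]=0$ and $\operatorname{ad}e$ acts as a scalar on $G_{\bar 1}$.
\end{itemize}
So the structures are exactly the pairs $(a,b)$ with $ab=0$.

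The group $\mathrm{GL}(G_{\bar 0})\times\mathrm{GL}(G_{\bar 1})=\mathbb{C}^*\times\mathbb{C}^*$ acts by $e\mapsto \alpha e$, $f\mapsto\beta f$, and this transforms $(a,b)$ into $(\alpha a,\ \beta^2\alpha^{-1}b)$. This splits into three cases.
\begin{itemize}
\item If $a\neq 0$, then $b=0$, and taking $\alpha=a^{-1}$ gives $(B_1)$.
\item If $a=0$ and $b\neq0$, then choosing $\beta^2=b^{-1}$ (possible over $\mathbb{C}$, with $\alpha=1$) gives $[f,f]=e$, which is $(B_2)$.
\item If $a=b=0$, the superalgebra is trivial, which is $(B_3)$.
\end{itemize}

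Finally, the three superalgebras are pairwise non-isomorphic. The derived superalgebra $[G,G]$ is an invariant, and it equals $G_{\bar 1}$, $G_{\bar 0}$ and $0$, respectively. There is no genuine obstacle in this argument. The only points requiring care are the signs in the super Jacobi identity for two odd arguments, and the corrected parity reading of $(B_2)$.
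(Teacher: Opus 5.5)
Your argument is correct and complete. Your reading of $(B_2)$ as $[f,f]=e$ is also right. The grading forces $[G_{\bar 1},G_{\bar 1}]\subseteq G_{\bar 0}$, and the paper's own list of compatible structures confirms it: $\mathrm{B}_{2,1}$ has $ff=\frac{1}{2}e$, whose super-commutator is $[f,f]=2ff=e$. Likewise the basis should be $\{e,f\}$ rather than $\{x,y\}$.

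The paper gives no proof of this statement. It imports the classification from Zhang and Bai \cite{ZhangBai2012} as input for the list of compatible left-symmetric superalgebra structures. Your direct computation is a self-contained verification of what that citation supplies. It has four steps: the general bracket $[e,f]=af$, $[f,f]=be$; the super Jacobi identity reducing to $ab=0$; normalisation by $\mathbb{C}^*\times\mathbb{C}^*$ acting as $(a,b)\mapsto(\alpha a,\beta^2\alpha^{-1}b)$; and separation of the three classes by the parity of $[G,G]$.

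One small point is worth making explicit. You did not list the orderings $(f,e,f)$ and $(f,f,e)$ of the Jacobi identity. They also reduce to $ab=0$, since every term there is $\pm ab\,e$ or $0$. Alternatively, note that the identity is graded-symmetric once super skew-symmetry holds. Either way, your claim that the structures are \emph{exactly} the pairs with $ab=0$ is then fully justified.
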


\begin{theorem}
    Let $G=G_{\bar{0}} \oplus G_{\bar{1}}$ be a 3-dimensional Lie superalgebra with $\operatorname{dim} G_{\bar{0}} = 1$. Assume that $\{e, f_1, f_2\}$ is a homogeneous basis of $G$ with $G_{\bar{0}}= \mathbb{C}e$ and $G_{\bar{0}}= \mathbb{C}f_1 \oplus \mathbb{C}f_2$. Then $G$ is isomorphic to one of the following Lie superalgebras:
 
        \[
        \begin{array}{ll}
        (C_1): & [e, f_2] = f_1 ;\\[4pt]
        (C_2)_\lambda: & [e, f_1] = f_1,\; [e, f_2] = \lambda f_2,\quad \lambda \in \mathbb{C} ;\\[4pt]
        (C_3): & [e, f_1] = f_1,\; [e, f_2] = f_1 + f_2 ;\\[4pt]
        (C_4): & \text{trivial Lie superalgebra;}\\[4pt]
        (C_5): & [f_1, f_2] = e ;\\[4pt]
        (C_6): & [f_1, f_1] = e .
        \end{array}
        \]
    
    Note that $\mathrm{ (C_2)_\lambda} \cong \mathrm{ (C_2)_\tau}$ if and only if $\lambda \tau=1$.
\end{theorem}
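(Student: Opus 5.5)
This classification is quoted from \cite{ZhangBai2012}, so I would reprove it directly by normal forms under $G=\mathrm{GL}(G_{\bar 0})\times \mathrm{GL}(G_{\bar 1})$. Write $G_{\bar 0}=\mathbb{C}e$ and $G_{\bar 1}=\mathbb{C}f_1\oplus\mathbb{C}f_2$. Since $[G_{\bar 0},G_{\bar 0}]=0$ automatically, a Lie superalgebra structure on $G$ amounts to two pieces of data, related by the super Jacobi identity:
\begin{itemize}
\item an operator $D=\mathrm{ad}\,e|_{G_{\bar 1}}\in\operatorname{End}(G_{\bar 1})$;
\item a symmetric bilinear form $\beta\colon G_{\bar 1}\times G_{\bar 1}\to G_{\bar 0}=\mathbb{C}e$, given by $[f_i,f_j]=\beta(f_i,f_j)e$.
\end{itemize}
I would first write out the three relevant instances of the super Jacobi identity.
\begin{itemize}
\item Type $(e,f,f)$ gives $\beta(Df,f')+\beta(f,Df')=0$, because $[e,e]=0$. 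In other words, $D$ is skew with respect to $\beta$.
\item Type $(f,f,f)$ gives $\beta(f,f)\,Df=0$ for all odd $f$, after polarisation.
\item Type $(e,e,f)$ is automatic.
\end{itemize}

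Case $\beta=0$. Then $D$ is arbitrary, and rescaling $e$ together with conjugation by $\mathrm{GL}_2$ acts on $D$ by $D\mapsto c\,PDP^{-1}$. I would run through the Jordan forms of $D$ up to a nonzero scalar:
\begin{itemize}
\item $D=0$ gives $C_4$;
\item $D$ nilpotent and nonzero gives $C_1$;
\item $D$ diagonalizable with a nonzero eigenvalue, normalized to $1$, gives $(C_2)_\lambda$ with eigenvalues $1,\lambda$;
\item $D$ a single Jordan block with nonzero eigenvalue gives $C_3$.
\end{itemize}
For the isomorphism remark: the unordered pair of eigenvalues $\{1,\lambda\}$ is determined up to a common scalar. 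The normalizations $\{1,\lambda\}$ and $\{\lambda^{-1},1\}$ both arise, so $(C_2)_\lambda\cong(C_2)_{\lambda^{-1}}$ when $\lambda\ne0$. For the converse I would use that the ratio of eigenvalues is an invariant up to inversion.

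Case $\beta\neq0$. A nonzero symmetric form on $\mathbb{C}^2$ has rank $2$ or rank $1$, with normal forms $[f_1,f_2]=e$ and $[f_1,f_1]=e$ respectively. It remains to show that $D=0$ in both subcases.
\begin{itemize}
\item If $\beta$ has rank $2$, the isotropic-free condition fails only on two lines, so $\beta(f,f)\ne0$ on a Zariski-dense set. The $(f,f,f)$ identity then forces $Df=0$ on that dense set, hence $D=0$. This gives $C_5$.
\item If $\beta$ has rank $1$, take $\beta(f_1,f_1)=1$ and $f_2$ in the radical. The $(f,f,f)$ identity again forces $D=0$ on the dense set where $\beta(f,f)\ne0$, hence $D=0$. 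This gives $C_6$.
\end{itemize}

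The step I expect to need the most care is deriving the correct signs in the super Jacobi identity for three odd elements, namely $[f,[f,f]]=0$, which yields $\beta(f,f)Df=0$. This is what kills $D$ whenever $\beta\ne0$. Everything else is linear algebra on $2\times2$ matrices and symmetric forms.
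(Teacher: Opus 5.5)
Your argument is correct. The paper gives no proof of this statement; it is quoted from \cite{ZhangBai2012}. So your normal-form derivation is a self-contained substitute for a citation, not an alternative to an existing argument.

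The structure is sound. The pair $D=\operatorname{ad}e|_{G_{\bar 1}}$ and $\beta$ encodes the whole bracket, and the $(e,e,f)$ identity is automatic. The identity $[f,[f,f]]=0$ kills $D$ as soon as $\beta\neq 0$, because $\{f:\beta(f,f)\neq 0\}$ is dense in both the rank-$2$ and rank-$1$ cases. You obtain that identity by specialising the $(f,f,f)$ Jacobi identity, which gives $3\beta(f,f)Df=0$; polarisation is what goes the other way, so your phrasing there is slightly off but harmless. The skew-symmetry condition from type $(e,f,f')$ is then never used.

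Your route also gives something the citation does not. The variety of $(1|2)$-dimensional Lie superalgebras is literally the union of two linear subspaces: $\{\beta=0\}$ (all $D$, dimension $4$) and $\{D=0\}$ (all symmetric $\beta$, dimension $3$). Their generic points are $(C_2)_\lambda$ and $C_5$. This is exactly the fact the paper later imports from \cite{AKK} when it restricts the search for components to LSSA structures on $(C_2)_\lambda$ and $C_5$.

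One correction to make explicit concerns the closing remark. Read literally, ``$(C_2)_\lambda\cong(C_2)_\tau$ iff $\lambda\tau=1$'' is false, for example when $\lambda=\tau=2$. What your spectral argument actually proves is $(C_2)_\lambda\cong(C_2)_\tau$ iff $\tau\in\{\lambda,\lambda^{-1}\}$. Say that this is the version you establish, rather than calling your step ``the converse''.

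Also phrase the invariant as the unordered pair of eigenvalues of $D$ up to a common nonzero factor, i.e.\ a point of $\mathbb{P}^1$ modulo swapping coordinates, rather than as a ratio. That way $\lambda=0$, where the ratio is $0$ or $\infty$, is covered. It is an invariant because $e$ is unique up to scalar and $D\mapsto c^{-1}PDP^{-1}$.
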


\begin{theorem}
Let \( G = \mathcal{G}_0 \oplus \mathcal{G}_1 \) be a 3-dimensional Lie superalgebra with \( \dim \mathcal{G}_0 = 2 \). Assume that \(\{e_1, e_2\}\) is a basis of \(\mathcal{G}_0\) and \(\{f\}\) is a basis of \(\mathcal{G}_1\). Then \( G \) is isomorphic to one of the mutually non-isomorphic Lie superalgebras given as follows:

\[
\begin{array}{ll}
(D_1): &  [e_1, f] = f ;\\[4pt]
(D_2): &  [f, f] = e_1 ;\\[4pt]
(D_3): & \text{trivial Lie superalgebra;}\\[4pt]
(D_4)_\mu: &  [e_1, e_2] = e_1,\; [e_2, f] = \mu f ;\\[4pt]
(D_5): &  [e_1, e_2] = e_1,\; [e_2, f] = -\frac{1}{2} f,\; [f, f] = e_1.
\end{array}
\]
\end{theorem}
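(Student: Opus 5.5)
Since $\dim G_0=2$ and $\dim G_1=1$, a Lie superalgebra structure is determined by three pieces of data: a Lie bracket on $G_0$, a representation $\rho$ of $G_0$ on the line $G_1$ (i.e.\ a linear functional $\varphi\in G_0^*$ with $[x,f]=\varphi(x)f$), and a symmetric bracket $[f,f]=v\in G_0$. The super Jacobi identity imposes two compatibility conditions: $G_0$-equivariance $[x,v]=2\varphi(x)v$ for all $x\in G_0$, and the identity $[[f,f],f]=0$, i.e.\ $\varphi(v)=0$. I would classify up to the action of $\mathrm{GL}(G_0)\times\mathrm{GL}(G_1)$. Two facts do most of the work. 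First, $\varphi$ vanishes on $[G_0,G_0]$, because a one-dimensional representation kills commutators. Second, rescaling $f\mapsto cf$ multiplies $v$ by $c^2$, so a nonzero $v$ can always be normalized.

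\emph{Case 1: $G_0$ abelian.} Then equivariance says $2\varphi(x)v=0$ for all $x$, so $\varphi=0$ or $v=0$.
\begin{itemize}
\item If $\varphi=0$ and $v=0$, we get the trivial superalgebra $(D_3)$.
\item If $\varphi=0$ and $v\neq 0$, choose the basis with $e_1=v$; this gives $(D_2)$.
\item If $\varphi\neq 0$ and $v=0$, choose $e_1$ with $\varphi(e_1)=1$ and $e_2\in\ker\varphi$; this gives $(D_1)$.
\end{itemize}

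\emph{Case 2: $G_0$ non-abelian.} Choose a basis with $[e_1,e_2]=e_1$. This fixes $[G_0,G_0]=\mathbb{C}e_1$, so $\varphi(e_1)=0$. Put $\mu=\varphi(e_2)$. The remaining freedom is $e_2\mapsto e_2+ae_1$, which leaves $\mu$ unchanged, together with rescaling $e_1$.
\begin{itemize}
\item If $v=0$, we get $(D_4)_\mu$. The parameter $\mu$ is an isomorphism invariant: it is the eigenvalue of $\operatorname{ad}e_2$ on $G_1$, and $e_2$ is determined modulo $[G_0,G_0]$ by the normalization $[e_1,e_2]=e_1$.
\item If $v\neq 0$, write $v=\alpha e_1+\beta e_2$. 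Taking $x=e_1$ in the equivariance condition gives $[e_1,v]=\beta e_1=0$, so $\beta=0$ and $v=\alpha e_1$ with $\alpha\neq0$. Taking $x=e_2$ gives $[e_2,v]=-\alpha e_1$, which must equal $2\mu\alpha e_1$; hence $\mu=-\tfrac12$. The condition $\varphi(v)=\alpha\varphi(e_1)=0$ then holds automatically. Rescaling $f$ makes $\alpha=1$, which yields $(D_5)$.
\end{itemize}

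\emph{Non-isomorphism.} Finally I would check that the listed superalgebras are pairwise non-isomorphic, using invariants:
\begin{itemize}
\item whether $G_0$ is abelian;
\item whether $[G_1,G_1]=0$;
\item whether $[G_0,G_1]=0$;
\item the invariant $\mu$ for the family $(D_4)_\mu$.
\end{itemize}
None of these steps is hard. The only point needing care is the equivariance computation in the non-abelian case, including the sign convention for $[e_2,e_1]$; it is this computation that forces the value $-\tfrac12$ in $(D_5)$.
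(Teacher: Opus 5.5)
Your argument is correct. It also takes a different route from the paper, simply because the paper gives no proof of this statement: it imports the list from Zhang and Bai \cite{ZhangBai2012} as the catalogue of sub-adjacent Lie superalgebras underlying the LSSA classification. Citing buys brevity and keeps the labels $D_1,\dots,D_5$ consistent with the source of the LSSA tables.

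Your reduction to the data $([\cdot,\cdot]|_{\mathcal{G}_0},\varphi,v)$ captures every instance of the super Jacobi identity. The constraints are $\varphi([\mathcal{G}_0,\mathcal{G}_0])=0$, $[x,v]=2\varphi(x)v$ and $\varphi(v)=0$. In particular, the instances with exactly two odd arguments all collapse to the same equivariance condition, whatever the order of the arguments. The case split then yields exactly the five types. Your observation that the normalization $[e_1,e_2]=e_1$ leaves only $e_2\mapsto e_2+ae_1$ and rescalings of $e_1$ is precisely what makes $\mu$ an invariant. The equivariance computation also explains where the value $-\frac12$ in $(D_5)$ comes from, which the citation leaves unexplained.

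Beyond being self-contained, your approach gives invariants that are useful later in the paper. The vanishing of $[\mathcal{G}_1,\mathcal{G}_1]$ and the vanishing of $[\mathcal{G}_0,\mathcal{G}_1]$ are Zariski-closed conditions stable under $\mathrm{GL}(\mathcal{G}_0)\times\mathrm{GL}(\mathcal{G}_1)$. They therefore immediately justify the non-degenerations $D_1\not\to D_2$, $D_2\not\to D_1$, $(D_4)_\mu\not\to D_2$ and $(D_4)_\mu\not\to D_5$. The paper only asserts these (``one easily checks'') in its final proof.
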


All 2- and 3-dimensional left-symmetric superalgebras were classified in \cite{ZhangBai2012}. Since that classification includes certain superalgebras which appear as special cases of broader families, we omit such redundancies here and present only representatives of non-isomorphic families.

\begin{theorem}
Let $S$ be a 2-dimensional left-symmetric superalgebra with one-dimensional even part. Then $S$ is isomorphic to exactly one of the following superalgebras.

\begin{itemize}
    \item If $S$ is a compatible LSSA structure on the Lie superalgebra of type $(\mathrm{B_1})$, then it is isomorphic to one of the following:
    \begin{longtable}{lclllll}
        $\mathrm{B}_{1,3}^{k}$&$:$&$ e e=ke $&$ e f=f $\\  
        $\mathrm{B}_{1,4}$&$:$&$ e e=e $&$ f e=f$\\ 
        $\mathrm{B}_{1,5}^{k \not\in \{0,-1\}}$&$:$&$ e e=ke $&$ e f=(k+1)f $&$ f e=kf $\\
    \end{longtable}
    
    \item If $S$ is a compatible LSSA structure on the Lie superalgebra of type $(\mathrm{B_2})$, then it is isomorphic to one of:
    \begin{longtable}{lclllll}
        $\mathrm{B}_{2,1}$&$:$&$ f f=\frac{1}{2}e $\\
        $\mathrm{B}_{2,2}$&$:$&$ e e=e $&$ e f=f $&$ f e=f $&$ f f=\frac{1}{2}e $ 
    \end{longtable}
    
    \item If $S$ is a compatible LSSA structure on the Lie superalgebra of type $(\mathrm{B_3})$, then it is isomorphic to one of:
    \begin{longtable}{lclllll}
        $\mathrm{B}_{3,1}$&$:$&$ \text{trivial} $\\
        $\mathrm{B}_{3,2}$&$:$&$ e e=e $\\
        $\mathrm{B}_{3,3}$&$:$&$ e e=e $&$ e f=f $&$ f e=f $ 
    \end{longtable}
\end{itemize}
\end{theorem}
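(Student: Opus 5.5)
The plan is a direct structure-constant computation, organized by the sub-adjacent Lie superalgebra. Since $S_{\bar 0}=\mathbb{C}e$ and $S_{\bar 1}=\mathbb{C}f$, parity forces the general product to be
\[
ee=a e,\quad ef=b f,\quad fe=c f,\quad ff=d e,\qquad a,b,c,d\in\mathbb{C}.
\]
The group $G=\mathrm{GL}(S_{\bar 0})\times \mathrm{GL}(S_{\bar 1})$ consists only of rescalings $e\mapsto te$, $f\mapsto sf$ with $t,s\neq 0$. These act by
\[
(a,b,c,d)\mapsto (ta,\,tb,\,tc,\,s^2t^{-1}d)
\]
(up to the obvious inversion of $t,s$). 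So both the classification and the non-isomorphism claims reduce to describing orbits of this torus action on the solution set of the super left-symmetric identity.

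First I impose the identity on all homogeneous triples $(x,y,z)$. The triple $x=y=e$ gives nothing. The triple $x=e,\ y=f$ with $z=e$ gives $c(c-a)=0$, and with $z=f$ gives $d(2b-a-c)=0$. For $x=y=f$ the sign is $-1$, so the identity reads $(ff)z=f(fz)$. With $z=e$ this gives $d(a-c)=0$, and with $z=f$ it gives $d(b-c)=0$. The case $x=f,\ y=e$ is equivalent to $x=e,\ y=f$. Hence the variety is
\[
c(c-a)=0,\qquad d(a-c)=0,\qquad d(b-c)=0
\]
(the second equation then follows). The super-commutator is $[e,f]=(b-c)f$ and $[f,f]=2d\,e$, which identifies the type $(\mathrm{B_1})$, $(\mathrm{B_2})$ or $(\mathrm{B_3})$ and gives the three bullets of the statement.

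Next I split into cases.
\begin{itemize}
\item If $d\neq 0$, then $a=b=c$, so the type is $(\mathrm{B_2})$. Rescaling $f$ normalizes $d=\tfrac12$, and rescaling $e$ makes $a\in\{0,1\}$. This gives $\mathrm{B}_{2,1}$ and $\mathrm{B}_{2,2}$.
\item If $d=0$ and $b\neq c$, the type is $(\mathrm{B_1})$, and I rescale $e$ so that $b-c=1$.
 \begin{itemize}
 \item If $c=0$, this yields $\mathrm{B}_{1,3}^{k}$ with $k=a$.
 \item If $c=a=k\neq 0$, this yields $ee=ke$, $ef=(k+1)f$, $fe=kf$. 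The value $k=-1$ (where $ef=0$) becomes $\mathrm{B}_{1,4}$ after $e\mapsto -e$; the remaining values give $\mathrm{B}_{1,5}^{k}$ with $k\notin\{0,-1\}$.
 \end{itemize}
\item If $d=0$ and $b=c$, the type is $(\mathrm{B_3})$. Either $c=0=b$ with $a\in\{0,1\}$ after scaling, giving $\mathrm{B}_{3,1}$ and $\mathrm{B}_{3,2}$; or $a=b=c\neq0$, normalized to $1$, giving $\mathrm{B}_{3,3}$.
\end{itemize}

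The final step, and the only one needing care, is "exactly one". The torus action multiplies $(a,b,c)$ by a common scalar $t$ and changes $d$ only by a nonzero factor. Therefore the following are isomorphism invariants:
\begin{itemize}
\item the vanishing pattern of $a,b,c,d$;
\item the Lie type;
\item within type $(\mathrm{B_1})$, the ratios $a:(b-c)$ and $c:(b-c)$.
\end{itemize}
Computing these invariants separates all the listed superalgebras and all values of $k$. In particular, one must check that the normalization $b-c=1$ uses up the scaling freedom, so that $k$ is a genuine invariant. The main obstacle is the bookkeeping at the boundary values $k=0,-1$. Here $\mathrm{B}_{1,5}^{0}=\mathrm{B}_{1,3}^{0}$, and $\mathrm{B}_{1,5}^{-1}$ is the opposite-sign form of $\mathrm{B}_{1,4}$. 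This is precisely why those parameters are excluded from $\mathrm{B}_{1,5}^{k}$.
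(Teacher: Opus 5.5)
Your proof is correct. The paper gives no argument for this statement. It quotes the list from Zhang and Bai, organized by the sub-adjacent Lie superalgebra, and only asserts that members which are special cases of larger families were deleted. Your route is self-contained. You derive the defining equations $c(c-a)=0$, $d(a-c)=0$, $d(b-c)=0$ (with $d(2b-a-c)=0$ redundant). You then analyze orbits of the two-dimensional torus $(e,f)\mapsto(te,sf)$, using the vanishing pattern of $(a,b,c,d)$ and, on type $(\mathrm{B_1})$, the invariants $a/(b-c)$ and $c/(b-c)$.

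What this buys is exactly what the paper leaves unproved: the ``exactly one'' clause and the reasons behind the pruning, namely $\mathrm{B}_{1,5}^{0}=\mathrm{B}_{1,3}^{0}$ and $\mathrm{B}_{1,5}^{-1}\cong\mathrm{B}_{1,4}$ via $e\mapsto -e$. It also corrects a misprint: your bracket $[f,f]=2d\,e$ shows that $(\mathrm{B_2})$ must read $[f,f]=e$ rather than $[f,f]=f$.

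Moreover, your equations describe the whole variety at once. It is the union of the planes $\{c=d=0\}$, $\{c=a,\ d=0\}$ and $\{a=b=c\}$ in $\mathbb{C}^4$. These are the closures of the families $\mathrm{B}_{1,3}^k$ and $\mathrm{B}_{1,5}^k$ and of $\mathcal{O}(\mathrm{B}_{2,2})$. All three are $2$-dimensional, so your computation shows the $(1,1)$ variety has dimension $2$. This does not match the paper's later claim that the variety, and the closure for $\mathrm{B}_{1,5}^k$, have dimension $3$.

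The citation route, for its part, is what scales to the $(1,2)$ and $(2,1)$ cases, where a by-hand orbit analysis of this kind becomes much heavier.
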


\begin{theorem}
    Let $S$ be a 3-dimensional left-symmetric superalgebra with one-dimensional even part. Then $S$ is isomorphic to exactly one of the following superalgebras.
    \begin{itemize}
    \item If $S$ is a compatible LSSA structure on the Lie superalgebra of type $(\mathrm{C_1})$, then it is isomorphic to one of the following:
        \begin{longtable}{lclllll}
        $ \mathrm{C}_{1,1} $&$:$&$ e f_2=f_1 $&$ f_1 f_2=e $&$ f_2 f_1=-e $\\ 
        $ \mathrm{C}^k_{1,2} \ $&$:$&$ e f_2=(k+1)f_1 $&$ f_2 e=kf_1 $\\
        $ \mathrm{C}_{1,3}  $&$:$&$ e e=e $&$ e f_2=f_1 $\\ 
        $ \mathrm{C}_{1,4}  $&$:$&$ e e=e $&$ e f_1=f_1 $&$ e f_2=f_1+f_2 $&$ f_1 e=f_1 $&$ f_2 e=f_2 $ \\
        \end{longtable}
    
    \item If $S$ is a LSSA structure on the Lie superalgebra of type $(\mathrm{C_2)_\lambda}$, then it is isomorphic to one of:
        \begin{longtable}{lclllll}
        $ \mathrm{C}_{{2}_\lambda,1}  $&$:$&$ e e=(1+\lambda)e $&$ e f_1=f_1 $&$ e f_2=\lambda f_2 $& &      \\
        &&$ f_1 f_2=e $&$ f_2 f_1=-e $&&&\\

        $ \mathrm{C}_{{2}_\lambda,2}  $&$:$&$ e e=(1-\lambda)e $&$ e f_1=f_1 $&$ e f_2=f_1+\lambda f_2 $& &\\ 
         & & $ f_2 e=f_1 $ &&&&\\

        $ \mathrm{C}_{{2}_\lambda,3 \quad (\lambda\neq1)}  $&$:$&$ e e=(\lambda-1)e $&$ e f_1=\lambda f_1 $&$ e f_2=f_1+(2\lambda-1) f_2 $&&$  $\\ 
        & & $ f_1 e=(\lambda-1)f_1 $ & $f_2 e=f_1+(\lambda-1)f_2$ &&&\\

        $ \mathrm{C}_{{2}_\lambda,4 \quad (\lambda\neq \pm 1)}  $&$:$&$ e e=(\lambda-1)e $&$ e f_1=f_1+f_2 $&$ e f_2=\lambda f_2 $& &\\
        && $ f_1 e=f_2 $ &&&&\\
    
        $ \mathrm{C}_{{2}_\lambda,5 \quad (\lambda\neq \pm 1)}  $&$:$&$ e e=(1-\lambda)e $&$ e f_1=(2-\lambda)f_1+f_2 $&$ e f_2=f_2 $& \\
        && $ f_1 e=(1-\lambda)f_1+f_2 $&$
        f_2 e=(1-\lambda)f_2 $ &&&\\ 

        $ \mathrm{C}^k_{{2}_\lambda,6}  $&$:$&$ e e=ke $&$ e f_1=f_1 $&$ e f_2=\lambda f_2 $\\

        $ \mathrm{C}^k_{{2}_\lambda,7 \quad (k\neq0, \lambda\neq \pm 1)}  $&$:$&$ e e=ke $&$ e f_1=(k+1)f_1 $&$ e f_2=\lambda f_2 $& \\
        && $ f_1 e=kf_1 $  &&&&\\

        $\mathrm{C}^k_{{2}_\lambda,8 \quad (k\neq 0)}  $&$:$&$ e e=ke $&$ e f_1=f_1 $&$ e f_2=(k+\lambda) f_2 $& \\
        && $ f_2 e=k f_2 $ &&&&\\

        $ \mathrm{C}^k_{{2}_\lambda,9 \quad (k\neq 0)}  $&$:$&$ e e=ke $&$ e f_1=(k+1)f_1 $&$ e f_2=(k+\lambda) f_2 $& \\
        && $ f_1 e=kf_1 $&$ f_2 e=kf_2 $ &&& \\
        \end{longtable}
    
    \item If $S$ is a LSSA structure on the Lie superalgebra of type $(\mathrm{C_3})$, then it is isomorphic to one of:
        \begin{longtable}{lclllll}
        $ \mathrm{C}_{{3,1}}  $&$:$&$ e e=2e $&$ e f_1=f_1 $&$ e f_2=f_1+ f_2 $&$ f_1 f_2=e $&$ f_2 f_1=-e $\\
        $\mathrm{C}^k_{3,2} $&$:$&$ e e=ke $&$ e f_1=f_1 $&$ e f_2=f_1+ f_2 $\\
        $\mathrm{C}^k_{3,3 \quad (k\neq 0)} $&$:$&$ e f_1=f_1 $&$ e f_2=(k+1)f_1+f_2 $&$ f_2 e=kf_1 $\\
        $\mathrm{C}^k_{3,4 \quad (k\neq 0)}  $&$:$&$ e e=ke $&$ e f_1=(k+1)f_1 $&$ f_1 e=kf_1 $&$ f_2 e=kf_2 $\\
        \end{longtable}

    \item If $S$ is a LSSA structure on the Lie superalgebra of type $(\mathrm{C_4})$, then it is isomorphic to one of:
        \begin{longtable}{lclllll}
        $ \mathrm{C}_{{4,1}}  $&$:$&$ f_1 f_2=e $&$ f_2 f_1=-e  $\\
        $ \mathrm{C}_{{4,2}}  $&$:$&$ e e=e $\\
        $ \mathrm{C}_{{4,3}} $&$:$&$ e e=e $&$ e f_2=f_2 $&$ f_2 e=f_2 $\\
        $ \mathrm{C}_{{4,4}}  $&$:$&$ e e=e $&$ e f_1=f_1 $&$ e f_2=f_2 $&$ f_1 e=f_1 $&$ f_2 e=f_2 $\\
        $ \mathrm{C}_{{4,5}}  $&$:$&$ e f_2=f_1 $&$ f_2 e=f_1 $\\
        \end{longtable}

    \item If $S$ is a LSSA structure on the Lie superalgebra of type $(\mathrm{C_5})$, then it is isomorphic to one of:
        \begin{longtable}{lclllll}
        $ \mathrm{C}_{{5,1}}$&$:$&$ f_1 e=kf_2 $&$ f_2 f_1=e $\\
        $ \mathrm{C}_{{5,2}}$&$:$&$ e f_1=f_2 $&$ f_1 e=kf_2 $&$ f_2 f_1=e $\\
        $ \mathrm{C}_{{5,3}} $&$:$&$ e e=e $&$ e f_1=f_1 $&$ f_1 e=f_1 $&$ f_2 f_1=e $\\
        $ \mathrm{C}_{{5,4}}  $&$:$&$ e e=e $&$ e f_1=f_1+f_2 $&$ f_1 e=f_1+f_2 $&$ f_2 f_1=e $\\
        $ \mathrm{C}^k_{5,5 \quad (k \ge 0,k\neq \frac{1}{2})}  $&$:$&$ f_1 f_2=(\frac{1}{2}-k)e $&$ f_2 f_1=(\frac{1}{2}+k)e $\\
        \end{longtable}
        
    \item If $S$ is a LSSA structure on the Lie superalgebra of type $(\mathrm{C_6})$, then it is isomorphic to one of:
        \begin{longtable}{lclllll}
        $\mathrm{C}_{{6,1}}  $&$:$&$ f_1 f_1=\frac{1}{2}e $&$ f_1 f_2=e $&$ f_2 f_1=-e $\\
        $\mathrm{C}_{{6,2}} $&$:$&$ f_1 f_1=\frac{1}{2}e $\\
        $\mathrm{C}_{{6,3}} $&$:$&$ e f_1=f_2 $&$ f_1 e=f_2 $&$ f_1 f_1=\frac{1}{2}e $\\
        $\mathrm{C}_{{6,4}}  $&$:$&$ e e=e $&$ e f_1=f_1 $&$ f_1 e=f_1 $&$ f_1 f_1=\frac{1}{2}e $\\
        \end{longtable}
    \end{itemize}
\end{theorem}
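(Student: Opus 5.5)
The statement is a classification result taken from \cite{ZhangBai2012}, so the plan is to reproduce it by the standard method: fix the sub-adjacent Lie superalgebra and solve the compatibility equations. Let $S=S_{\bar 0}\oplus S_{\bar 1}$ with $S_{\bar 0}=\mathbb{C}e$ and $S_{\bar 1}=\mathbb{C}f_1\oplus\mathbb{C}f_2$. The super-commutator makes $S$ a Lie superalgebra, which by the preceding theorem is isomorphic to exactly one of $(C_1),\dots,(C_6)$. So I would work one type at a time: fix the bracket of that type in the basis given there and describe every product compatible with it.

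By parity, the product is determined by the following data:
\begin{itemize}
\item the scalar $ee=ae$;
\item two $2\times 2$ matrices $L$ and $R$, given by $ef_i=\sum_j L_{ji}f_j$ and $f_ie=\sum_j R_{ji}f_j$;
\item a bilinear form $\omega$ on $S_{\bar 1}$, given by $f_if_j=\omega_{ij}e$.
\end{itemize}
Compatibility with the bracket forces three things. First, $L-R=\operatorname{ad}e|_{S_{\bar 1}}$. Second, the symmetric part $\omega_{ij}+\omega_{ji}$ must equal the coefficient of $[f_i,f_j]$. Third, $[e,e]=0$, which holds automatically.

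Next I would impose the left-symmetric superidentity on all homogeneous triples. Since $\dim S_{\bar 0}=1$, the cases $x=y=e$ and $x=y$ odd reduce to a single condition each. The conditions are:
\begin{itemize}
\item $(e,f_i,e)$ and $(e,f_i,f_j)$: these say that $L$ is a representation-type map, giving $[L,R]=aR-R^2$-type relations and $L$ acting on $\omega$ compatibly.
\item $(f_i,f_j,e)$ and $(f_i,f_j,f_k)$: with the sign $-1$, these give quadratic relations between $\omega$, $R$ and $a$, for instance $\omega_{ij}a\,e-\omega$-twisted terms of the form $f_i(f_je)$.
\end{itemize}
These are finitely many polynomial equations in the roughly eleven unknowns $a,L,R,\omega$.

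Within each type I would then use the automorphism group of the Lie superalgebra, namely even automorphisms $e\mapsto\alpha e$, $f\mapsto Pf$ that preserve the bracket, to normalise. The normalisation splits into two cases.
\begin{itemize}
\item When $\omega\neq0$, i.e.\ types $(C_5)$, $(C_6)$, or the antisymmetric $\omega$ in $(C_1)$--$(C_4)$, the relations force $a$ and $R$ to be tightly constrained. I would normalise $\omega$ to a canonical form, then bring $R$ into Jordan form relative to it.
\item When $\omega=0$, the problem is essentially that of commuting-type pairs $(L,R)$ with $L-R$ fixed. I would put $R$ in Jordan form by $P$ in the centraliser of $\operatorname{ad}e$, and rescale $e$ to normalise $a$ to $0$ or $1$, or keep it as a parameter $k$ when $\operatorname{ad}e\neq0$ forbids rescaling.
\end{itemize}
This produces the listed families, e.g.\ $\mathrm{C}^k_{2_\lambda,6}$ through $\mathrm{C}^k_{2_\lambda,9}$ according to which eigenvectors of $\operatorname{ad}e$ carry nonzero right action.

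The final step is to prove the listed superalgebras pairwise non-isomorphic and to remove the overlaps, such as parameter values where families coincide or $(C_2)_\lambda\cong(C_2)_{1/\lambda}$. I would use invariants: the sub-adjacent Lie superalgebra, the scalar $a$ up to the allowed scaling, the spectra of $L$ and $R$, and the rank and symmetry type of $\omega$.

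I expect the main obstacle to be the $(C_2)_\lambda$ case. There the solution set branches on special values of $\lambda$ ($\pm1$, $0$, and resonances such as $k+1=\lambda$), and one must track carefully which parameter values are absorbed into other families. The $\lambda\leftrightarrow1/\lambda$ symmetry also interchanges $f_1$ and $f_2$. I would handle this by first solving for general $\lambda$, then listing the resonance loci where additional nilpotent off-diagonal entries in $R$ become allowed, which gives $\mathrm{C}_{2_\lambda,2}$ through $\mathrm{C}_{2_\lambda,5}$. Finally I would check each of these against the generic families using the invariants above.
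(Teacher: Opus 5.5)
Your strategy (fix the sub-adjacent Lie superalgebra, encode $S$ by $ee=ae$, the matrices $L,R$ and the form $\omega$, impose the identity, normalise by even automorphisms) is the standard one. The paper itself gives no argument; it only quotes the list of Zhang and Bai.

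\textbf{The gap.} Your concluding claim that this computation ``produces the listed families'' is never checked, and the printed list does not pass the check. With $D=L-R$, the identity is equivalent to the following four conditions:
\[
[R,D]=R^{2}-aR,\qquad D^{T}\omega+\omega L=a\omega,\qquad a\omega-\omega R\ \text{antisymmetric},
\]
\[
(\omega_{ij}+\omega_{ji})\,Lf_k=\omega_{jk}\,Rf_i+\omega_{ik}\,Rf_j .
\]
They come from the triples $(e,f_i,e)$, $(e,f_i,f_j)$, $(f_i,f_j,e)$ and $(f_i,f_j,f_k)$ respectively. Testing the list against them shows the following.
\begin{enumerate}
\item In $\mathrm{C}_{5,1}$ the triple $(e,f_1,f_1)$ gives $(ef_1)f_1-e(f_1f_1)=0$ but $(f_1e)f_1-f_1(ef_1)=kf_2f_1=ke$. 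So only $k=0$ is left-symmetric; for $k\neq0$ one even has $[e,f_1]=-kf_2\neq0$, so it is not on $C_5$.
\item The same triple in $\mathrm{C}_{5,2}$ gives $e=ke$, so only $k=1$ survives. These are exactly the values the paper's own degenerations from $\mathrm{C}_{5,4}$ produce, so the parameter $k$ is a misprint.
\item The printed $\mathrm{C}^k_{3,4}$ has $[e,f_2]=-kf_2$. It lies on $(C_2)_{-k}$ and coincides with $\mathrm{C}^k_{2_{-k},9}$. Meanwhile the genuine $C_3$-structure with $R=kI$ (namely $ee=ke$, $ef_1=(k+1)f_1$, $ef_2=f_1+(k+1)f_2$, $f_ie=kf_i$) is absent.
\item The zero algebra, an LSSA on $C_4$, is isomorphic to nothing in the list.
\item ``Exactly one'' fails unless $\lambda$ is restricted modulo $\lambda\sim\lambda^{-1}$, which the statement does not do. The basis $E=\lambda^{-1}e$, $F_1=f_2$, $F_2=\lambda f_1$ gives $\mathrm{C}_{2_\lambda,4}\cong\mathrm{C}_{2_{1/\lambda},2}$. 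Similarly $\mathrm{C}_{2_\lambda,5}\cong\mathrm{C}_{2_{1/\lambda},3}$ and $\mathrm{C}^k_{2_\lambda,7}\cong\mathrm{C}^{k/\lambda}_{2_{1/\lambda},8}$.
\item ``$k\ge0$'' in $\mathrm{C}^k_{5,5}$ can only mean ``$k$ up to sign''.
\end{enumerate}
Carried out correctly, your method proves a corrected classification, and that is what you must state. It cannot establish the statement as printed.

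\textbf{Steps in the method that are misdescribed.}
\begin{itemize}
\item You cannot ``put $R$ in Jordan form by $P$ in the centraliser of $\operatorname{ad}e$''. For $D=\operatorname{diag}(1,\lambda)$ with $\lambda\neq1$, that centraliser is the diagonal torus. What works is solving $[R,D]=R^2-aR$ entrywise. For $R=\begin{pmatrix}p&q\\ r&s\end{pmatrix}$ it reads $p^2+qr=ap$, $s^2+qr=as$, $q(p+s-a-\lambda+1)=0$, $r(p+s-a+\lambda-1)=0$. So for $\lambda\neq1$, $R$ is triangular with $p,s\in\{0,a\}$, and a nonzero corner requires $p+s-a=\pm(\lambda-1)$. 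This yields types $2$--$5$ and $6$--$9$ at once.
\item $L$ and $R$ need not commute: in $\mathrm{C}_{2_\lambda,2}$, $[L,R]\neq0$ for $\lambda\neq1$.
\item $\operatorname{ad}e\neq0$ does not block rescaling $e$ when $\operatorname{ad}e$ is nilpotent; you rescale $f_1$ along with it. This is why $a\in\{0,1\}$ on $C_1$.
\item The fourth identity, which you leave as ``$\omega$-twisted terms'', is what controls the cases $\omega\neq0$. For antisymmetric $\omega=cJ\neq0$ with $J=\begin{pmatrix}0&1\\ -1&0\end{pmatrix}$, the second identity and $M^{T}J+JM=(\operatorname{tr}M)J$ give $R=(a-\operatorname{tr}D)I$. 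The fourth then forces $R=0$ and $a=\operatorname{tr}D$, producing $\mathrm{C}_{1,1}$, $\mathrm{C}_{2_\lambda,1}$, $\mathrm{C}_{3,1}$ and $\mathrm{C}_{4,1}$.
\item On $C_5$, where $L=R$, the fourth identity gives $\omega_{12}Rf_1=\omega_{21}Rf_2=0$. Together with $\omega R=a\omega$, this leaves, when $f_2f_1=e$, only $ee=ae$, $ef_1=f_1e=af_1+qf_2$, $Rf_2=0$. These are $\mathrm{C}_{5,1}$ with $k=0$, $\mathrm{C}_{5,2}$ with $k=1$, $\mathrm{C}_{5,3}$ and $\mathrm{C}_{5,4}$, with no free parameter.
\end{itemize}
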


\begin{theorem}
    Let $S$ be a 3-dimensional left-symmetric superalgebra with two-dimensional even part. Then $S$ is isomorphic to exactly one of the following superalgebras.
    \begin{itemize}
    \item If $S$ is a LSSA with the even part ${\mathrm (A_1)}$, then it is isomorphic to one of:
        \begin{longtable}{lclllll}
        $\mathrm{\hat{A}}^{k_1,k_2}_{1,3  \quad (k_1,k_2 \neq0,k_1 \le k_2, k_1 \neq -k_2)}$&$:$&$ e_1 e_1=e_1 $&$ e_1 f=k_1f $&$ e_2 e_2=e_2 $&$ e_2 f=k_2f $\\
        $\mathrm{\hat{A}}^{k_1,k_2}_{1,5 \quad (k_1 \neq 0 \quad or \quad k_2 \neq 1)}$&$:$&$ e_1 e_1=e_1 $&$ e_1 f=k_1f $&$ e_2 e_2=e_2 $&$ e_2 f=k_2f $&$ f e_2=f $\\
        $\mathrm{\hat{A}}_{{1,6}}$&$:$&$ e_1 e_1=e_1 $&$ e_2 e_2=e_2 $&$ e_2 f=f $&$ f e_2=f $&$ f f=e_2 $\\
        \end{longtable}

    \item If $S$ is a LSSA with the even part ${\mathrm (A_2)}$, then it is isomorphic to one of:
        \begin{longtable}{lcllllll}
        $\mathrm{\hat{A}}_{{2,1}}$&$:$&$ e_1 e_1=e_1 $&$ e_1 e_2=e_2 $&$ e_2 e_1=e_2 $\\       
        $\mathrm{\hat{A}}^k_{2,2 \quad (k\neq0)}$&$:$&$ e_1 e_1=e_1 $&$ e_1 e_2=e_2 $&$ e_1 f=kf $&$ e_2 e_1=e_2 $\\
        $\mathrm{\hat{A}}^k_{2,3}$&$:$&$ e_1 e_1=e_1 $&$ e_1 e_2=e_2 $&$ e_1 f=kf $&$ e_2 e_1=e_2 $&$ e_2 f=f $\\
        $\mathrm{\hat{A}}_{{2,4}}$&$:$&$ e_1 e_1=e_1 $&$ e_1 e_2=e_2 $&$ e_1 f=f $&$ e_2 e_1=e_2 $&$ f e_1=f $\\
        $\mathrm{\hat{A}}^k_{2,5 \quad (k\neq1)}$&$:$&$ e_1 e_1=e_1 $&$ e_1 e_2=e_2 $&$ e_1 f=kf $&$ e_2 e_1=e_2 $&$ f e_1=f $\\
        $\mathrm{\hat{A}}^k_{2,6}$&$:$&$ e_1 e_1=e_1 $&$ e_1 e_2=e_2 $&$ e_1 f=kf $&$ e_2 e_1=e_2 $&$ e_2 f=f $&$ f e_1=f $\\
        $\mathrm{\hat{A}}_{{2,7}}$&$:$&$ e_1 e_1=e_1 $&$ e_1 e_2=e_2 $&$ e_1 f=f $&$ e_2 e_1=e_2 $&$ f e_1=f $&$ f f=e_2 $\\
        \end{longtable}

    \item If $S$ is a LSSA with the even part ${\mathrm (A_3)}$, then it is isomorphic to one of:
        \begin{longtable}{lclllll}
        $\mathrm{\hat{A}}_{{3,1}}$&$:$&$ e_1 e_1=e_1 $\\
        $\mathrm{\hat{A}}^k_{3,2 \quad (k\neq0)}$&$:$&$ e_1 e_1=e_1 $&$ e_1 f=kf $\\
        $\mathrm{\hat{A}}^k_{3,3}$&$:$&$ e_1 e_1=e_1 $&$ e_1 f=kf $&$ e_2 f=f $\\
        $\mathrm{\hat{A}}_{{3,4}}$&$:$&$ e_1 e_1=e_1 $&$ f f=e_2 $\\
        $\mathrm{\hat{A}}_{{3,5}}$&$:$&$ e_1 e_1=e_1 $&$ e_1 f=f $&$ f e_1=f $\\
        $\mathrm{\hat{A}}^k_{3,6 \quad (k\neq1)}$&$:$&$ e_1 e_1=e_1 $&$ e_1 f=kf $&$ f e_1=f $\\  
        $ \mathrm{\hat{A}}^k_{3,7}$&$:$&$ e_1 e_1=e_1 $&$ e_1 f=kf $&$ e_2 f=f $&$ f e_1=f $\\
        $\mathrm{\hat{A}}_{{3,8}}$&$:$&$ e_1 e_1=e_1 $&$ e_1 f=f  $&$ f e_1=f $&$ f f=e_1 $\\
        \end{longtable}

    \item If $S$ is a LSSA with the even part ${\mathrm (A_4)}$, then it is isomorphic to one of:
        \begin{longtable}{lclllll}
        $\mathrm{\hat{A}}_{{4,1}}$&$:$&$ e_1 f=f $\\
        $\mathrm{\hat{A}}_{{4,2}}$&$:$&$ f f=e_2 $\\
        \end{longtable}

    \item If $S$ is a LSSA with the even part ${\mathrm (A_5)}$, then it is isomorphic to one of:
        \begin{longtable}{lclllll}
        $\mathrm{\hat{A}}_{{5,1}}$&$:$&$  e_1 e_1=e_2 $&$ f f=e_2 $\\
        $\mathrm{\hat{A}}_{{5,2}}$&$:$&$  e_1 e_1=e_2 $\\
        $\mathrm{\hat{A}}_{{5,3}}$&$:$&$  e_1 e_1=e_2 $&$ e_1 f=f $\\
        $\mathrm{\hat{A}}_{{5,4}}$&$:$&$  e_1 e_1=e_2 $&$ e_2 f=f $\\
        \end{longtable}
        
    \item If $S$ is a LSSA with the even part ${\mathrm (A_6)}$, then it is isomorphic to one of:
        \begin{longtable}{lclllll}
       
        $\mathrm{\hat{A}}_{{6,1}}$&$:$&$  e_2 e_1=-e_1 $&$ e_2 e_2=-e_2 $&$ e_2 f=-\frac{1}{2}f $&$ f f=e_1 $\\
        \end{longtable}

    \item If $S$ is a LSSA with the even part ${\mathrm (A_7)}$, then it is isomorphic to one of:
        \begin{longtable}{lclllll}
        $\mathrm{\hat{A}}_{{{7}_\lambda,1} \quad (\lambda\neq-1)}$&$:$&$  e_2 e_1=-e_1 $&$ e_2 e_2=\lambda e_2 $&$ e_2 f=-\frac{1}{2}f $&$ f f=e_1 $\\
        $\mathrm{\hat{A}}_{{{7}_\lambda,2} \quad (\lambda\neq0)}^k$&$:$&$  e_2 e_1=-e_1 $&$ e_2 e_2=\lambda e_2 $&$ e_2 f=kf $&$ f e_2=\lambda f $\\
        $\mathrm{\hat{A}}_{{{7}_\lambda,3} }^k$&$:$&$  e_2 e_1=-e_1 $&$ e_2 e_2=\lambda e_2 $&$ e_2 f=kf $\\
        \end{longtable}

    \item If $S$ is a LSSA with the even part ${\mathrm (A_8)}$, then it is isomorphic to one of:
        \begin{longtable}{lclllll}
        $\mathrm{\hat{A}}_{{{8},1}}^k$&$:$&$  e_1 e_1=2e_1 $&$ e_1 e_2=e_2 $&$ e_1 f=kf $&$ e_2 e_2=e_1 $\\
        \end{longtable}

    \item If $S$ is a LSSA with the even part ${\mathrm (A_9)}$, then it is isomorphic to one of:
        \begin{longtable}{lclllll}
        $\mathrm{\hat{A}}_{{{9},1}}^k$&$:$&$  e_1 e_2=e_1 $&$ e_2 e_2=e_2 $&$ e_2 f=kf $\\
        $\mathrm{\hat{A}}_{{{9},2}}^k$&$:$&$  e_1 e_2=e_1 $&$ e_2 e_2=e_2 $&$ e_2 f=kf $&$f e_2=f $\\
        $\mathrm{\hat{A}}_{{{9},3}}$&$:$&$  e_1 e_2=e_1 $&$ e_2 e_2=e_2 $&$ e_2 f=\frac{1}{2}f $&$ f e_2=f $&$ f f=e_1 $\\
        \end{longtable}

    \item If $S$ is a LSSA with the even part ${\mathrm (A_{10})_\lambda}$, then it is isomorphic to one of:
        \begin{longtable}{lclllll}
        $\mathrm{\hat{A}}_{{{10}_\lambda,1} \quad (\lambda\neq0)}^k$&$:$&$  e_1 e_2=\lambda e_1 $&$ e_2 e_1=(\lambda-1)e_1 $&$ e_2 e_2=e_1+\lambda e_2 $&$ e_2 f=kf $\\

        $\mathrm{\hat{A}}_{{{10}_\lambda,2} \quad (\lambda\neq0)}^k$&$:$&$  e_1 e_2=\lambda e_1 $&$ e_2 e_1=(\lambda-1)e_1 $&$ e_2 e_2=e_1+\lambda e_2 $&$ e_2 f=kf $\\ && $ f e_2=\lambda f $\\
        
        $\mathrm{\hat{A}}_{{{10}_\lambda,3} \quad (\lambda\neq0)}$&$:$&$  e_1 e_2=\lambda e_1 $&$ e_2 e_1=(\lambda-1)e_1 $&$ e_2 e_2=e_1+\lambda e_2 $&$ e_2 f=(\lambda-\frac{1}{2})f $\\
        &&$ f e_2=\lambda f$& $ f f=e_1 $\\
        \end{longtable}
    \item If $S$ is a LSSA with the even part ${\mathrm (A_{11})}$, then it is isomorphic to one of:
        \begin{longtable}{lclllll}
        $\mathrm{\hat{A}}_{{{11},1}}^k$&$:$&$  e_2 e_1=-e_1 $&$ e_2 e_2=e_1-e_2 $&$ e_2 f=kf $\\
        $\mathrm{\hat{A}}_{{{11},2}}$&$:$&$  e_2 e_1=-e_1 $&$ e_2 e_2=e_1-e_2 $&$ e_2 f=-\frac{1}{2}f $&$ f f=e_1 $\\
        $\mathrm{\hat{A}}_{{{11},3}}^k$&$:$&$  e_2 e_1=-e_1 $&$ e_2 e_2=e_1-e_2 $&$ e_2 f=kf $&$ f e_2=-f $\\
        \end{longtable}
    \end{itemize}
\end{theorem}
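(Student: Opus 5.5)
This classification is imported from \cite{ZhangBai2012}, with redundant cases removed. The plan is therefore twofold: recover the list from the classification of the sub-adjacent Lie superalgebras, and check that each omitted entry is isomorphic to (or a parameter specialization of) a listed one.

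Let $S=S_{\bar 0}\oplus S_{\bar 1}$ with $S_{\bar 0}=\langle e_1,e_2\rangle$ and $S_{\bar 1}=\langle f\rangle$.

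\textbf{Step 1: the even part.} $S_{\bar 0}$ is a 2-dimensional left-symmetric algebra, so it is isomorphic to one of the known types $(A_1)$--$(A_{11})$ from Bai's 2-dimensional classification. These are the cases indexing the list. I fix the even multiplication in the normal form of the chosen type.

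\textbf{Step 2: the odd data.} By grading,
\[
e_if=a_if,\qquad fe_i=b_if,\qquad ff=c_1e_1+c_2e_2 .
\]
I then impose the super left-symmetric identity on the homogeneous triples:
\begin{itemize}
\item Triples $(e_i,e_j,f)$ say that $e_i\mapsto a_i$ is a representation of the sub-adjacent Lie algebra of $S_{\bar 0}$. Since it is one-dimensional, $a$ must vanish on $[S_{\bar 0},S_{\bar 0}]$.
\item Triples $(e_i,f,e_j)$ and $(f,e_i,e_j)$ give bilinear relations between the $a_i$, the $b_i$ and the even structure constants.
\item Triples $(f,f,e_i)$, $(e_i,f,f)$ and $(f,f,f)$ (with sign $-1$ for two odd arguments) constrain $c$. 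In particular, $ff$ must be compatible with the left and right actions.
\end{itemize}
This produces, for each type, a finite system of polynomial equations in at most $2+2+2$ unknowns, which I solve case by case.

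\textbf{Step 3: normalization.} I act by the stabilizer of the even normal form in $\mathrm{GL}(S_{\bar 0})$, together with the rescaling $f\mapsto \alpha f$. The rescaling multiplies $c$ by $\alpha^2$, so any nonzero $c$ can be normalized. This yields the representatives $\hat{\mathrm A}_{i,j}$. For example, for $(A_4)$ (where $S_{\bar 0}$ has zero product) one obtains only $e_1f=f$ or $ff=e_2$, up to triviality.

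\textbf{Step 4: non-isomorphism.} I separate the listed superalgebras using invariants: the isomorphism type of $S_{\bar 0}$, the sub-adjacent Lie superalgebra $(D_1)$--$(D_5)$, whether $S_{\bar 1}S_{\bar 1}=0$, and the eigenvalues of $L_x$ and $R_x$ on $f$ relative to distinguished even elements (idempotents, annihilators). The parameter $k$ is an eigenvalue of this kind, so distinct admissible values of $k$ give non-isomorphic superalgebras.

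\textbf{Main obstacle.} The delicate part is the redundancy bookkeeping. For each entry removed relative to \cite{ZhangBai2012}, one must exhibit an explicit isomorphism to a parametric member of a listed family. The symmetric types, such as $(A_1)$ with its swap $e_1\leftrightarrow e_2$ and the condition $k_1\le k_2$, need particular care in choosing the parameter ranges. I would handle these by computing the automorphism group of each even normal form first.
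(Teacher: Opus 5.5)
Your method is sound: fix the even normal form, write $e_if=a_if$, $fe_i=b_if$, $ff=c$, impose the super identity, and normalize under $\mathrm{Aut}(S_{\bar 0})\times\mathbb{C}^*$. It is also more than the paper offers, since the paper's proof is only the citation of \cite{ZhangBai2012} plus deletion of entries it regards as special cases. One simplification: the triples $(e_i,f,e_j)$ do not involve the $a_i$ at all. They say exactly that $e_i\mapsto b_i$ is an algebra homomorphism $S_{\bar 0}\to\mathbb{C}$, which shortens each case to a few lines. The genuine gap is that you assume the computation lands on the printed list. It does not, and it breaks exactly where you pass quickly: ``up to triviality'' and the final redundancy bookkeeping.

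\textbf{The case $(A_4)$.} Here you get three structures, not two. The zero superalgebra is a left-symmetric superalgebra of dimension $(2,1)$, and it appears nowhere in the list. Discarding it contradicts ``isomorphic to exactly one of the following''.

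\textbf{The case $(A_1)$.} The homomorphism condition gives $b_i^2=b_i$ and $b_1b_2=0$, so $b\in\{0,(1,0),(0,1)\}$. If $c\neq 0$, then up to the swap one gets $a=b=(0,1)$, which is $\mathrm{\hat{A}}_{1,6}$. If $c=0$, then $a$ is arbitrary. Any isomorphism restricts to an element of $\mathrm{Aut}(A_1)=\{\mathrm{id},\ e_1\leftrightarrow e_2\}$ and rescales $f$, and $b$ is an invariant. Hence the following are pairwise non-isomorphic:
\begin{itemize}
\item $\mathrm{\hat{A}}^{k_1,k_2}_{1,3}$ for every unordered pair $\{k_1,k_2\}$;
\item $\mathrm{\hat{A}}^{k_1,k_2}_{1,5}$ for every $(k_1,k_2)\in\mathbb{C}^2$.
\end{itemize}
So the printed restrictions $k_1,k_2\neq0$, $k_1\neq-k_2$ and $(k_1,k_2)\neq(0,1)$ discard genuine classes. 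For example, $\mathrm{\hat{A}}^{0,0}_{1,3}$, $\mathrm{\hat{A}}^{1,0}_{1,3}$, $\mathrm{\hat{A}}^{1,-1}_{1,3}$ and $\mathrm{\hat{A}}^{0,1}_{1,5}$ are isomorphic to nothing in the list. Your bookkeeping step was to show that every deleted entry of \cite{ZhangBai2012} is a specialization of a listed family. It fails for exactly such entries, because they are specializations only at forbidden parameter values. The paper itself uses $\mathrm{\hat{A}}^{k,0}_{1,3}$ in its degeneration table.

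\textbf{The case $(A_6)$.} The even part of $\mathrm{\hat{A}}_{6,1}$ is $(A_7)_{-1}$. Its structures with $c=0$ are $e_2f=kf$, possibly with $fe_2=-f$. These are missing from the $(A_6)$ item and are covered only if $\lambda=-1$ is admitted in $\mathrm{\hat{A}}^k_{7_\lambda,2}$ and $\mathrm{\hat{A}}^k_{7_\lambda,3}$.

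Carried out carefully, your plan proves a corrected statement, not the printed one. The corrections are: drop the restrictions above, read $k_1\le k_2$ as identification under the swap, and add the zero superalgebra.
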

\subsection{Geometric classification}

\begin{theorem}
    The variety of 2-dimensional left-symmetric superalgebras with one-dimensional even part has dimension  $3$ and it has  $3$  irreducible components defined by
\begin{center}
        $\mathcal{C}_{1}=\overline{{\mathcal O}( {\mathrm {B}_{1,5}^k} )},$ 
        $\mathcal{C}_{2}=\overline{{\mathcal O}( {\mathrm {B}_{1,3}^k}} ),$
        $\mathcal{C}_{3}=\overline{{\mathcal O}( {\mathrm {B}_{2,2}}} ).$
\end{center}
In particular, there is only one rigid superalgebra.
\end{theorem}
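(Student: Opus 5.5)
Since $V=V_0\oplus V_1$ has $\dim V_0=\dim V_1=1$, I would not rely only on degeneration tables. Instead I would solve the defining superidentities explicitly in the four structure constants. Write a superalgebra in $\mathcal{S}^{1,1}$ as
\[
ee=\alpha e,\quad ef=\beta f,\quad fe=\gamma f,\quad ff=\delta e,
\]
so the ambient space is $\mathbb{C}^4$ and $G\simeq\mathbb{C}^*\times\mathbb{C}^*$ acts by $e\mapsto ae$, $f\mapsto bf$. This gives $(\alpha,\beta,\gamma,\delta)\mapsto(a\alpha,a\beta,a\gamma,b^2a^{-1}\delta)$.

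\emph{Step 1: the equations.} I would substitute homogeneous triples into the left-symmetric superidentity.
\begin{itemize}
\item The triple $(x,y,z)=(e,f,e)$ gives $\gamma(\gamma-\alpha)=0$.
\item The triple $(e,f,f)$ gives $\delta(2\beta-\alpha-\gamma)=0$.
\item For $x=y=f$ the sign $(-1)^{1\cdot 1}=-1$ forces $(ff)z=f(fz)$. With $z=e$ this gives $\delta(\alpha-\gamma)=0$, and with $z=f$ it gives $\delta(\beta-\gamma)=0$.
\item The triple $(e,e,z)$ is trivial.
\end{itemize}
Hence the variety is
\[
\{\gamma(\gamma-\alpha)=0,\ \delta=0\}\ \cup\ \{\alpha=\beta=\gamma\},
\]
that is, the union of the three planes
\[
P_1=\{\gamma=\delta=0\},\qquad P_2=\{\gamma=\alpha,\ \delta=0\},\qquad P_3=\{\alpha=\beta=\gamma\}.
\]
No plane is contained in another, so these are exactly the irreducible components.

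\emph{Step 2: identify each plane with an orbit closure.}
\begin{itemize}
\item The generic point of $P_1$ has $\beta\neq 0$. Rescaling $e$ normalizes it to $\mathrm{B}_{1,3}^k$, so $P_1=\overline{\bigcup_k\mathcal{O}(\mathrm{B}_{1,3}^k)}$.
\item The generic point of $P_2$ has $\alpha=\gamma\neq0$. Normalizing so that the $fe$-coefficient equals the $ee$-coefficient $k$ puts it in the family $\mathrm{B}_{1,5}^k$, up to the excluded values of $k$. So $P_2=\overline{\bigcup_k\mathcal{O}(\mathrm{B}_{1,5}^k)}$.
\item The generic point of $P_3$ has $\alpha\delta\neq0$. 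Rescaling $e$ and then $f$ sends it to $\alpha=1$, $\delta=\tfrac12$, which is $\mathrm{B}_{2,2}$. Its stabilizer is $\{a=1,\ b=\pm1\}$, which is finite, so $\mathcal{O}(\mathrm{B}_{2,2})$ is open in $P_3$. Thus $\mathrm{B}_{2,2}$ is rigid.
\end{itemize}
The $\mathrm{B}_{1,3}^k$ and $\mathrm{B}_{1,5}^k$ orbits each have a one-dimensional stabilizer ($a=1$, $b$ free), so they are not rigid and their components come from one-parameter families. For the dimension of the variety I would take the maximum of the dimensions of the orbit closures. Here the main thing to check carefully is that the normalization in the paper (e.g.\ the factor $\tfrac12$ in $ff=\tfrac12 e$) is consistent with the chosen action of $G$. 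I would then reconcile the computed value with the one stated in the theorem.

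\emph{Step 3: the remaining algebras.} Every other algebra in the classification must lie in one of the three planes, which gives all the degenerations for free:
\begin{itemize}
\item $\mathrm{B}_{1,4}$ lies in $P_2$ with $\beta=0$;
\item $\mathrm{B}_{2,1}$ lies in $P_3$ with $\alpha=0$;
\item $\mathrm{B}_{3,1}$ and $\mathrm{B}_{3,2}$ lie in $P_1$;
\item $\mathrm{B}_{3,3}$ lies in $P_2\cap P_3$.
\end{itemize}
If explicit parametrized degenerations are wanted, as elsewhere in the paper, I would write them as one-parameter base changes, for instance $\mathrm{B}_{2,2}\xrightarrow{(te,\,\sqrt t f)}\mathrm{B}_{2,1}$.

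The main obstacle I expect is not the degenerations. It is the bookkeeping in Step 2: making sure that each generic point of $P_1$ and $P_2$ really is isomorphic to a listed family member rather than an excluded parameter value, such as $k\in\{0,-1\}$ in $\mathrm{B}_{1,5}^k$. I also need to make sure the three planes are genuinely distinct components, so that none is absorbed into another.
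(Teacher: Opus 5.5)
\textbf{The gap: the dimension claim.} The one step you leave open, ``reconcile the computed value with the one stated in the theorem'', cannot be completed, and your own Step~1 shows why. Each of $P_1,P_2,P_3$ is a $2$-dimensional linear subspace of $\mathbb{C}^4$, so the variety has dimension $2$, not $3$.

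The group gives the same bound independently. $G\simeq\mathbb{C}^*\times\mathbb{C}^*$ is $2$-dimensional, so every orbit has dimension at most $2$. Moreover $\mathrm{B}_{1,5}^k$, exactly like $\mathrm{B}_{1,3}^k$, has the one-dimensional stabilizer $\{a=1\}$ you computed; equivalently, its even derivations are spanned by $e\mapsto 0,\ f\mapsto f$. So its orbits are curves, and the one-parameter family sweeps out the surface $P_2$.

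The paper's table value $\dim\mathcal{O}(\mathrm{B}_{1,5}^k)=3$ is therefore an error. It is even inconsistent with the paper's own value $2$ for $\mathrm{B}_{1,3}^k$, whose stabilizer has the same form. The ``dimension $3$'' part of the statement is false. The normalization $ff=\tfrac12 e$ plays no role, since $f\mapsto bf$ rescales $\delta$ freely. You should state the conclusion outright: the variety has dimension $2$, its components are $\mathcal{C}_1=P_2$, $\mathcal{C}_2=P_1$, $\mathcal{C}_3=P_3$, and $\mathrm{B}_{2,2}$ is its only rigid superalgebra.

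\textbf{Comparison with the paper.} Apart from this, your argument is correct and takes a genuinely different route. The paper never writes down the defining equations. Instead it:
\begin{itemize}
\item lists four explicit degenerations ($\mathrm{B}_{1,5}^k\to\mathrm{B}_{1,4},\mathrm{B}_{3,3}$; $\mathrm{B}_{1,3}^k\to\mathrm{B}_{3,2}$; $\mathrm{B}_{2,2}\to\mathrm{B}_{2,1}$);
\item separates $\mathrm{B}_{1,5}^k$ from the other two generic algebras by the closed set $\{c_{21}^2=c_{11}^1,\ c_{22}^1=0\}$, which is precisely your plane $P_2$;
\item leaves the reverse non-containments to its orbit-dimension table.
\end{itemize}
Your decomposition $P_1\cup P_2\cup P_3$ gives all of this at once and makes the dimension transparent, which is exactly what exposes the error. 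With the corrected dimensions the paper's component argument still works, because distinct irreducible closed sets of the same dimension cannot contain one another.

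\textbf{The bookkeeping in Step~2.} On $P_2$, rescale $e$ so that $\beta-\gamma=1$; this is possible iff $\beta\neq\alpha$. Then $k=\alpha/(\beta-\alpha)$, and $k\notin\{0,-1\}$ iff $\alpha\beta\neq0$. So the generic points of $P_2$ are indeed $\mathrm{B}_{1,5}^k$ with admissible $k$.
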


\begin{proof}
The dimensions of orbit closures for the superalgebras in the theorem are as follows.

\begin{longtable}{lllllllllllllllll}
$\dim  {\mathcal O}( {\mathrm {B}_{1,5}^k} )$&$=$&$3,$&&&$\dim  {\mathcal O}( {\mathrm {B}_{1,3}^k} )$&$=$&$\dim  {\mathcal O}( {\mathrm {B}_{2,2}} )$&$=$&$2.$

\end{longtable}
We present the following necessary degenerations.
\begin{longtable}{lcl|lcl}
${\mathrm {B}_{1,5}^{\frac{1}{t-1}}}$ & $\xrightarrow{\big((t-1)e_1,\ e_2 \big)}$ & ${\mathrm {B}_{1,4}}$ &
${\mathrm {B}_{1,3}^{\frac{1}{t}}}$ & $\xrightarrow{\big(te_1,\ e_2 \big)}$ & ${\mathrm {B}_{3,2}}$ \\

${\mathrm {B}_{1,5}^{\frac{1}{t}}}$ & $\xrightarrow{\big(te_1,\ e_2 \big)}$ & ${\mathrm {B}_{3,3}}$ &
${\mathrm {B}_{2,2}}$ & $\xrightarrow{\big(t^2e_1,\ te_2 \big)}$ & ${\mathrm {B}_{2,1}}$ \\
\end{longtable}
The following list justifies the necessary non-degenerations.
\begin{longtable}{lcl|l}

$ {\mathrm B_{1,5}^k} $ & $\not \rightarrow  $ & 
$\begin{array}{l}
{\mathrm B_{1,3}^k}, {\mathrm B_{2,2}}
\end{array}$
& 
$\begin{array}{l}
c_{21}^2=c_{11}^1, \ c_{22}^1=0

\end{array}$ 

\end{longtable}
Here $c_{ij}^{k}$ coefficients are structural constants in the $x_1=e, \ x_2=f$ basis.
\end{proof}

It is straightforward to see that if $S_1\to S_2$, then \( \mathfrak{G}(S_1)\to\mathfrak{G}(S_2)  \). This fact will be used in the proof of the following theorems.

\begin{theorem}
    The variety of 3-dimensional left-symmetric superalgebras with one-dimensional even part has dimension  $7$, and it has  $11$  irreducible components defined by
\begin{center}
        $\mathcal{C}_{1}=\overline{{\mathcal O}({\mathrm C}_{{2}_\lambda,7}^k )},$ 
        $\mathcal{C}_{2}=\overline{{\mathcal O}( {\mathrm C}_{{2}_\lambda,9}^k )},$
        $\mathcal{C}_{3}=\overline{{\mathcal O}( {\mathrm C}_{{2}_\lambda,1} )},$
        $\mathcal{C}_{4}=\overline{{\mathcal O}( {\mathrm C}_{{2}_\lambda,2} )},$
        $\mathcal{C}_{5}=\overline{{\mathcal O}( {\mathrm C}_{{2}_\lambda,3} )},$
        $\mathcal{C}_{6}=\overline{{\mathcal O}( {\mathrm C}_{{2}_\lambda,4} )},$
        $\mathcal{C}_{7}=\overline{{\mathcal O}( {\mathrm C}_{{2}_\lambda,5} )},$
        $\mathcal{C}_{8}=\overline{{\mathcal O}( {\mathrm C}_{{2}_\lambda,6}^k )},$
        $\mathcal{C}_{9}=\overline{{\mathcal O}( {\mathrm C}_{{2}_\lambda,8}^k )},$
        $\mathcal{C}_{10}=\overline{{\mathcal O}( {\mathrm C}_{5,4})},$
        $\mathcal{C}_{11}=\overline{{\mathcal O}( {\mathrm C}_{5,5}^k)}.$
\end{center}
In particular, there is only one rigid superalgebra.
\end{theorem}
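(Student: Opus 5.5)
We follow the same scheme as in the proof of Theorem~\ref{thm5} and of the $(1,1)$ case. The first step is to compute orbit dimensions via $\dim\mathcal{O}(S)=\dim G-\dim\operatorname{Der}_0(S)$ with $\dim G=1+4=5$. Adding the number of parameters of each family gives the dimension of the closure of the family.

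We expect the two-parameter families ${\mathrm C}_{2_\lambda,7}^k$ and ${\mathrm C}_{2_\lambda,9}^k$ to have generic even derivation algebra of dimension $0$ (only diagonal derivations are possible, and these are killed by $ee=ke\neq0$). Their closures would then have dimension $5+2=7$. This should also be the maximal value, which gives $\dim=7$ for the variety. The one-parameter families ${\mathrm C}_{2_\lambda,i}$, $i=1,\dots,5$, and ${\mathrm C}_{2_\lambda,6}^k$, ${\mathrm C}_{2_\lambda,8}^k$, ${\mathrm C}_{5,5}^k$ give closures of dimension $6$ or $7$. The isolated ${\mathrm C}_{5,4}$ should have a $0$-dimensional (or small) derivation algebra, so its orbit is open in its closure; it is the unique rigid superalgebra.

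Next we go through the remaining superalgebras Lie-type by Lie-type and exhibit explicit parametric bases $E_i^t$ realising degenerations into the listed components. The superalgebras over $(C_1)$ and $(C_3)$ should arise as limits of ${\mathrm C}_{2_\lambda,\ast}$ with $\lambda\to0$ or $\lambda\to1$; for example, ${\mathrm C}_{3,2}^k$ should come from ${\mathrm C}_{2_{1+t},6}$ or ${\mathrm C}_{2_{1+t},2}$ via $f_2\mapsto tf_2+\ldots$. Similarly, ${\mathrm C}_{3,3}^k$ and ${\mathrm C}_{3,4}^k$ should come from ${\mathrm C}_{2_\lambda,8},{\mathrm C}_{2_\lambda,9}$ with $\lambda=1+t$; ${\mathrm C}_{1,\ast}$ from $\lambda=t$ with $e\mapsto$ rescaled; and the $(C_4)$ cases from $\lambda$ fixed with $k$ or the Lie part scaled by $t$. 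For the odd–odd types, ${\mathrm C}_{6,\ast}$ should be obtained from ${\mathrm C}_{5,5}^{k}$ or ${\mathrm C}_{5,4}$ by the standard trick $f_1\mapsto f_1+t^{-1}f_2$ scaled appropriately. Likewise ${\mathrm C}_{5,1},{\mathrm C}_{5,2},{\mathrm C}_{5,3}$ and ${\mathrm C}_{4,1}$ should come from ${\mathrm C}_{5,4}$ or ${\mathrm C}_{5,5}^k$ by rescaling $e\mapsto t^2e$, $f_i\mapsto tf_i$.

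For the non-degenerations among the eleven candidates we first use the remark that $S_1\to S_2$ implies $\mathfrak G(S_1)\to\mathfrak G(S_2)$, together with the degeneration diagram of Lie superalgebras of dimension $(1,2)$. Since $(C_5)$ does not degenerate from $(C_2)_\lambda$ (the bracket $[G_1,G_1]$ vanishes on the latter), ${\mathrm C}_{5,4}$ and ${\mathrm C}_{5,5}^k$ are not in the closures of the $C_2$ families. Among families of equal closure dimension, no one lies in the closure of another. The remaining cases, namely a $6$-dimensional closure inside a $7$-dimensional one (e.g. ${\mathrm C}_{2_\lambda,6}^k$ versus ${\mathrm C}_{2_\lambda,7}^k$), must be handled by the Borel-stable closed subset lemma. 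For each such pair we exhibit polynomial conditions on $c_{ij}^k$ in the basis $x_1=e,x_2=f_1,x_3=f_2$, for instance $c_{21}^2=c_{11}^1$ or $c_{31}^3=0$ type relations, that hold on the larger family but fail on the target.

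The main obstacle is this last step: finding the invariant $\mathfrak{L}$-stable sets distinguishing the one-parameter families ${\mathrm C}_{2_\lambda,1},\dots,{\mathrm C}_{2_\lambda,5}$ from the two-parameter ones, where the Lie superalgebras coincide. If simple linear conditions fail, we would fall back on the invariant comparing $\dim\operatorname{Der}$ with the relation $\lambda$ versus the eigenvalues of $L_e,R_e$ on $S_1$, which are continuous in the closure.
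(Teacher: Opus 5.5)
Your outline follows the same scheme as the paper: an orbit-dimension table, explicit degenerations onto the non-generic superalgebras, and non-degenerations via $\mathfrak G(S_1)\to\mathfrak G(S_2)$ and Borel-stable conditions. However, the one computation you commit to is false, and no correct computation gives the stated numbers.

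For $\mathrm{C}^k_{2_\lambda,7}$ and $\mathrm{C}^k_{2_\lambda,9}$, the relation $ee=ke$ only forces $D(e)=0$. Since $S_{\bar 1}S_{\bar 1}=0$ and left and right multiplication by $e$ are diagonal in $\{f_1,f_2\}$, every map with $D(e)=0$, $D(f_1)=\alpha f_1$, $D(f_2)=\beta f_2$ is an even derivation. Thus $\dim\operatorname{Der}_0\ge 2$, the orbits have dimension at most $5-2=3$, and the two-parameter closures $\mathcal C_1,\mathcal C_2$ have dimension at most $5$, not $7$.

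The same bound holds for the rest of the list:
\begin{itemize}
\item the same torus stabilises $\mathrm{C}^k_{2_\lambda,6}$ and $\mathrm{C}^k_{2_\lambda,8}$;
\item the scalars $f\mapsto cf$ stabilise every structure with $S_{\bar 1}S_{\bar 1}=0$;
\item $\mathrm{C}_{2_\lambda,1}$ has the derivation $e\mapsto 0$, $f_1\mapsto f_1$, $f_2\mapsto -f_2$;
\item $\operatorname{Der}_0(\mathrm{C}^k_{5,5})$ is $2$-dimensional.
\end{itemize}
Every closure therefore has dimension at most $5$, and the bound is attained by $\mathrm{C}_{5,4}$, whose $\operatorname{Der}_0$ is zero. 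So the value $7$, which the paper only asserts in its table and which your count was meant to reproduce, cannot be established.

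The non-degeneration step has a second, independent gap. Equal closure dimension only excludes \emph{proper} inclusion; it does not show that two families give different components, and here several coincide:
\begin{itemize}
\item $e'=\lambda^{-1}e$, $f_1'=f_2$, $f_2'=f_1$ gives $\mathrm{C}^k_{2_\lambda,8}\cong\mathrm{C}^{k/\lambda}_{2_{1/\lambda},7}$;
\item $e'=\lambda^{-1}e$, $f_1'=f_2$, $f_2'=\lambda f_1$ gives $\mathrm{C}_{2_\lambda,4}\cong\mathrm{C}_{2_{1/\lambda},2}$;
\item $e'=\lambda^{-1}e$, $f_1'=\lambda^{-1}f_2$, $f_2'=f_1$ gives $\mathrm{C}_{2_\lambda,5}\cong\mathrm{C}_{2_{1/\lambda},3}$.
\end{itemize}
All three hold for $\lambda\neq 0$. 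They reflect the isomorphism $(C_2)_\lambda\cong(C_2)_{1/\lambda}$, which is not factored out once $\lambda$ ranges over all of $\mathbb C$. By density of these parameter values, $\mathcal C_9=\mathcal C_1$, $\mathcal C_6=\mathcal C_4$ and $\mathcal C_7=\mathcal C_5$. No Borel-stable condition of the kind you plan to look for, or of the kind listed in the paper, can separate these pairs.

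A correct argument must first identify such families, then compute orbit dimensions from the actual derivation algebras, and only then decide which closures are components. As it stands, neither the dimension nor the number of components in the statement can be proved this way. Your degeneration paragraph also names no explicit parametrised bases, which the paper does supply.
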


\begin{proof}

The dimensions of orbit closures for the superalgebras in the theorem are as follows.

\begin{longtable}{lllllllllllllllll}
&&&&$\dim  {\mathcal O}( {\mathrm C}_{{2}_\lambda,7}^k )$&$=$&$\dim  {\mathcal O}( {\mathrm C}_{{2}_\lambda,9}^k )$&$=$&$7,$\\

$\dim  {\mathcal O}( {\mathrm C}_{{2}_\lambda,1} )$&$=$&$\dim  {\mathcal O}( {\mathrm C}_{{2}_\lambda,2} )$&$=$&$\dim  {\mathcal O}( {\mathrm C}_{{2}_\lambda,3} )$&$=$&$\dim  {\mathcal O}( {\mathrm C}_{{2}_\lambda,4} )$&$=$&\\

&&$\dim  {\mathcal O}( {\mathrm C}_{{2}_\lambda,5} )$&$=$&$\dim  {\mathcal O}( {\mathrm C}_{{2}_\lambda,6}^k )$&$=$&$\dim  {\mathcal O}( {\mathrm C}_{{2}_\lambda,8}^k )$&$=$&$6,$\\

&&&&&&$\dim  {\mathcal O}( {\mathrm C}_{5,4} )$&$=$&$5,$\\

&&&&&&$\dim  {\mathcal O}( {\mathrm C}_{5,5}^k )$&$=$&$3.$
\end{longtable}

Since the Lie superalgebras $(C_2)_\lambda$ and $C_5$ correspond to the irreducible components of the variety of 3-dimensional Lie superalgebras with one-dimensional even part (see \cite{AKK}), we start by considering the compatible LSSA structures on these Lie superalgebras.

The list of non-degenerations below shows that all compatible LSSA structures on $(C_2)_\lambda$ correspond to irreducible components:

\begin{longtable}{lcl|l}

$ {\mathrm C}_{{2}_\lambda,7} $ & $\not \rightarrow  $ & 
$\begin{array}{l}
{\mathrm C}_{{2}_\lambda,1}, \ {\mathrm C}_{{2}_\lambda,2}, \ {\mathrm C}_{{2}_\lambda,3}, \ {\mathrm C}_{{2}_\lambda,4}, \\ 
{\mathrm C}_{{2}_\lambda,5}, \ {\mathrm C}_{{2}_\lambda,6}, \ {\mathrm C}_{{2}_\lambda,8}
\end{array}$
& 
$\left\{
\begin{array}{l}
c_{21}^1=c_{11}^1, \ c_{31}^2=0, \ c_{21}^2=c_{11}^1, \ c_{31}^3=0, \\
c_{12}^3c_{11}^1=( c_{12}^2-c_{13}^3) c_{21}^3, \\ 
2c_{12}^3c_{31}^3=( c_{11}^1c_{12}^3-c_{12}^2c_{21}^3+ c_{13}^3c_{21}^3)
\end{array}
\right.$ \\

${\mathrm C}_{{2}_\lambda,9}$ & $\not \rightarrow  $ & 
$\begin{array}{l}
 {\mathrm C}_{{2}_\lambda,1}, \ {\mathrm C}_{{2}_\lambda,2}, \ {\mathrm C}_{{2}_\lambda,3}, \ {\mathrm C}_{{2}_\lambda,4}, \\
 {\mathrm C}_{{2}_\lambda,5}, \ {\mathrm C}_{{2}_\lambda,6}, \ {\mathrm C}_{{2}_\lambda,8}
\end{array}$
& 
$\begin{array}{l}
 c_{21}^2=c_{11}^1, \ c_{21}^3=0, \ c_{31}^2=0
\end{array}$ \\

\end{longtable}

Additionally, the following list proves that the components $\overline{{\mathcal O}( {\mathrm C}_{5,4} )}$ and $\overline{{\mathcal O}( {\mathrm C}_{5,5}^k )}$ are irreducible.

\begin{longtable}{lcl}
${\mathrm C}_{5,4} $ & $\xrightarrow{\big(t e, \ t f_1, \ f_2\big)}$ & ${\mathrm C}_{5,1}$\\

${\mathrm C}_{5,4} $ & $\xrightarrow{\big(t e, \ \sqrt{t^2+1} f_1+ \frac{t^2}{\sqrt{t^2+1}} f_2, \ \frac{t}{\sqrt{t^2+1}} f_2 \big)}$ & ${\mathrm C}_{5,2}$\\

${\mathrm C}_{5,4} $ & $\xrightarrow{\big(e, \ t f_1+ t f_2, \ \frac{1}{t} f_2 \big)}$ & ${\mathrm C}_{5,3}$\\
\hline
\end{longtable}

\begin{longtable}{lcl|l}
$ {\mathrm C}_{5,4} $ & $\not \rightarrow  $ & 
$\begin{array}{l}
{\mathrm C}_{5,5}^k
\end{array}$
& 
$\begin{array}{l}
c_{23}^1=0

\end{array}$
\end{longtable}

The following list of degenerations completes the proof.

\begin{longtable}{lcl|lcl}
${\mathrm C}_{6,4} $ & $\xrightarrow{\big(t^2 e, \ t f_1, \ f_2 \big)}$ & ${\mathrm C}_{6,2}$ &
${\mathrm C}_{2_{1+t},6}^{k}$ & $\xrightarrow{\big(e, \ f_1+f_2, \ tf_2 \big)}$ & ${\mathrm C}_{3,2}^k$ \\

${\mathrm C}_{4,3} $ & $\xrightarrow{\big(t e, \ f_1, \ -\frac{1}{t} f_1+ f_2\big)}$ & ${\mathrm C}_{4,5}$ &
${\mathrm C}_{2_{-k},6}^{k}$ & $\xrightarrow{\big(e, \ f_1+tf_2, \ f_2 \big)}$ & ${\mathrm C}_{3,4}^k$ \\

${\mathrm C}_{6,4} $ & $\xrightarrow{\big(t^2 e, \ t f_1+f_2, \ t^3 f_2 \big)}$ & ${\mathrm C}_{6,3}$ &
${\mathrm C}_{3,1}$ & $\xrightarrow{\big(te, \ tf_1+(t^2-t)f_2, \ f_1+tf_2 \big)}$ & ${\mathrm C}_{4,1}$ \\

${\mathrm C}_{3,1} $ & $\xrightarrow{\big(te, \ t f_1+ t^2f_2, \ f_2 \big)}$ & ${\mathrm C}_{1,1}$ &
${\mathrm C}_{1,3}$ & $\xrightarrow{\big(e, \ \frac{t-1}{t^2}f_1+f_2, \ f_1+tf_2 \big)}$ & ${\mathrm C}_{4,2}$ \\

${\mathrm C}_{3,3}^k $ & $\xrightarrow{\big(te, \ t f_1+ tf_2, \ f_2 \big)}$ & ${\mathrm C}_{1,2}^k$ &
${\mathrm C}_{2_{1},8}^{\frac{1}{t}}$ & $\xrightarrow{\big(te, \ f_1+tf_2, \ f_2 \big)}$ & ${\mathrm C}_{4,3}$ \\

${\mathrm C}_{3,2}^{\frac{1}{t}} $ & $\xrightarrow{\big(te, \ t f_1+ t^2f_2, \ f_2 \big)}$ & ${\mathrm C}_{1,3}$ &
${\mathrm C}_{2_{-1},9}^{\frac{1}{t}}$ & $\xrightarrow{\big(te, \ f_1+f_2, \ f_1-f_2 \big)}$ & ${\mathrm C}_{4,4}$ \\

${\mathrm C}_{2_{-1},9}^{\frac{1}{t}}$ & $\xrightarrow{\big(te, \ tf_1+f_2, \ f_1-\frac{1}{t}f_2 \big)}$ & ${\mathrm C}_{1,4}$ &
${\mathrm C}_{5,4}$ & $\xrightarrow{\big((1+t^2)e, \ \frac{1}{\sqrt{2}}f_1+\frac{1+t^2}{\sqrt{2}}f_2, \ \frac{t}{\sqrt{2}}f_2 \big)}$ & ${\mathrm C}_{6,4}$ \\

${\mathrm C}_{2,1}^{1+t}$ & $\xrightarrow{\big(\frac{2}{2+t}e, \ \frac{t}{2+t}f_1+ \frac{1}{2+t}f_2, \ -f_1+\frac{1}{t}f_2 \big)}$ & ${\mathrm C}_{3,1}$ &
${\mathrm C}_{5,5}^{-\frac{1}{2t^2}}$ & $\xrightarrow{\big(e, \ tf_1+\frac{1}{{2t}}f_2, \ -2t^3f_1+tf_2 \big)}$ & ${\mathrm C}_{6,1}$ \\
\end{longtable}

\begin{longtable}{lcl}
    ${\mathrm C}_{2_{1+t},3}$ & $\xrightarrow{\big(\frac{2}{2+3t}e, \ \frac{t+kt}{k}f_1+ \frac{1}{2+t}f_2, \ -\frac{(k+1)^2(2+3t)}{k}f_1+(1+k)(2t+3t^2)f_2 \big)}$ & ${\mathrm C}_{3,3}^k$\\
\end{longtable}

Here $c_{ij}^{k}$ coefficients are structural constants in the $x_1=e_1, \ x_2=e_2, \ x_3=f$ basis.
\end{proof}

\begin{theorem}
    The variety of 3-dimensional left-symmetric superalgebras with two-dimensional even part has dimension  $7$, and it has  $13$  irreducible components defined by
\begin{center}
        $\mathcal{C}_{1}=\overline{{\mathcal O}( {\mathrm{\hat{A}}^{k_1,k_2}}_{1,3}} ),$ 
        $\mathcal{C}_{2}=\overline{{\mathcal O}( {\mathrm{\hat{A}}^{k_1,k_2}}_{1,5}} ),$
        $\mathcal{C}_{3}=\overline{{\mathcal O}( {\mathrm{\hat{A}}_{10_\lambda,1}^k} )},$
        $\mathcal{C}_{4}=\overline{{\mathcal O}( {\mathrm{\hat{A}}_{10_\lambda,2}^k} )},$
        $\mathcal{C}_{5}=\overline{{\mathcal O}( {\mathrm{\hat{A}}_{7_\lambda,2}^k} )},$
        $\mathcal{C}_{6}=\overline{{\mathcal O}( {\mathrm{\hat{A}}_{7_\lambda,3}^k} )},$
        $\mathcal{C}_{7}=\overline{{\mathcal O}( {\mathrm{\hat{A}}_{8,1}^k} )},$
        $\mathcal{C}_{8}=\overline{{\mathcal O}( {\mathrm{\hat{A}}_{10_\lambda,3}} )},$
        $\mathcal{C}_{9}=\overline{{\mathcal O}( {\mathrm{\hat{A}}_{1,6}} )},$
        $\mathcal{C}_{10}=\overline{{\mathcal O}( {\mathrm{\hat{A}}_{7_\lambda,1}} )},$
        $\mathcal{C}_{11}=\overline{{\mathcal O}( {\mathrm{\hat{A}}_{11,1}^k} )},$
        $\mathcal{C}_{12}=\overline{{\mathcal O}( {\mathrm{\hat{A}}_{11,3}^k} )},$
        $\mathcal{C}_{13}=\overline{{\mathcal O}( {\mathrm{\hat{A}}_{11,2}} )}.$

\end{center}
In particular, there are two rigid superalgebras.
\end{theorem}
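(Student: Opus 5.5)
The plan follows the template of the two preceding theorems. First I compute the orbit dimensions: for each listed superalgebra (or family) $S$, $\dim\mathcal{O}(S)=\dim G-\dim\operatorname{Der}_0(S)$ with $G=\mathrm{GL}_2\times\mathrm{GL}_1$, so $\dim G=5$. A one-parameter family adds one to its closure's dimension, and a two-parameter family adds two. I expect $\mathrm{\hat{A}}^{k_1,k_2}_{1,3}$ and $\mathrm{\hat{A}}^{k_1,k_2}_{1,5}$ to give closures of dimension $7$, which yields $\dim=7$ for the variety. Families such as $\mathrm{\hat{A}}^k_{10_\lambda,1}$, $\mathrm{\hat{A}}^k_{10_\lambda,2}$, $\mathrm{\hat{A}}^k_{7_\lambda,2}$, $\mathrm{\hat{A}}^k_{7_\lambda,3}$ and $\mathrm{\hat{A}}^k_{11,1}$ should give smaller values. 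The superalgebras without parameters, $\mathrm{\hat{A}}_{1,6}$ and $\mathrm{\hat{A}}_{11,2}$, should be the two rigid ones, since their orbits are open in their closures and are not contained in larger closures.

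Next I show that every algebra in the classification not on the list lies in one of the $13$ closures, by writing explicit degenerations with parametrized bases, in the style of the earlier tables. Natural choices are the following.
\begin{itemize}
\item The $\mathrm{\hat{A}}_{2,*}$, $\mathrm{\hat{A}}_{3,*}$, $\mathrm{\hat{A}}_{4,*}$ and $\mathrm{\hat{A}}_{5,*}$ should come from $\mathrm{\hat{A}}_{1,3}$, $\mathrm{\hat{A}}_{1,5}$, $\mathrm{\hat{A}}_{1,6}$ and $\mathrm{\hat{A}}_{8,1}$ via contractions such as $(e_1,te_2,f)$, or $(e_1+e_2,te_2,\ldots)$ to produce the nilpotent-type even parts.
\item $\mathrm{\hat{A}}_{6,1}$ should arise as a limit of $\mathrm{\hat{A}}_{7_\lambda,1}$ or $\mathrm{\hat{A}}_{11,2}$.
\item The $\mathrm{\hat{A}}_{9,*}$ should arise from $\mathrm{\hat{A}}_{10_\lambda,*}$ as $\lambda$ varies with $t$.
\item $\mathrm{\hat{A}}_{7_\lambda,*}$ at excluded parameter values should be handled by $\lambda\to$ limit arguments.
\end{itemize}
I also use the degeneration $\mathrm{\hat{A}}_{1,6}\to\mathrm{\hat{A}}_{3,4},\mathrm{\hat{A}}_{4,2}$ and similar ones, scaling $f$ by $t$ and $e_2$ by $t^2$.

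Then I prove that no listed component lies inside another, so that the 13 closures are distinct maximal irreducible sets. The first tool is the remark before the previous theorem: $S_1\to S_2$ implies $\mathfrak{G}(S_1)\to\mathfrak{G}(S_2)$. Applied to the sub-adjacent Lie superalgebras (and to the even parts, using Theorem~\ref{thm4} for the even left-symmetric algebra), it kills most candidate inclusions. For example, nothing built on an abelian even part can reach $\mathrm{\hat{A}}_{10_\lambda,*}$ or $\mathrm{\hat{A}}_{11,*}$. The dimension count kills any degeneration from a smaller orbit closure to a larger one. For the remaining pairs, such as $\mathrm{\hat{A}}_{1,5}\not\to\mathrm{\hat{A}}_{1,6}$, $\mathrm{\hat{A}}_{10_\lambda,1}\not\to\mathrm{\hat{A}}_{11,2}$, and the ones involving $\mathrm{\hat{A}}_{8,1}$, I invoke the Borel-stable closed subset lemma. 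I exhibit polynomial conditions on the structure constants $c_{ij}^k$ in the basis $x_1=e_1,x_2=e_2,x_3=f$, for instance $c_{33}^1=c_{33}^2=0$, or relations like $c_{23}^3=\tfrac12 c_{22}^2$. These conditions must hold for the whole family after a suitable triangular basis change and fail for the target.

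I expect the main obstacle to be these Borel-invariant separations for the families with $2$ parameters, i.e.\ showing that $\mathrm{\hat{A}}_{1,3}$ and $\mathrm{\hat{A}}_{1,5}$ do not swallow $\mathrm{\hat{A}}_{1,6}$, $\mathrm{\hat{A}}_{8,1}$, or the $7_\lambda$ families. The sub-adjacent Lie superalgebras may coincide there, and the invariants must involve the odd product $ff$. I would try first the condition that $S_{\bar1}S_{\bar1}=0$, which is closed and $G$-stable and holds on $\mathrm{\hat{A}}_{1,3},\mathrm{\hat{A}}_{1,5}$ but not on $\mathrm{\hat{A}}_{1,6}$. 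For $\mathrm{\hat{A}}_{8,1}$, whose even part is not abelian, I would separate it using the even part via Theorem~\ref{thm4}.
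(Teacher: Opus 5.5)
\textbf{Verdict: the plan cannot be completed, because parts of the statement are false.} Your outline has the same architecture as the paper's proof: orbit dimensions, explicit degenerations to absorb the superalgebras not on the list, and non-degenerations from the even part, the sub-adjacent Lie superalgebra and Borel-stable conditions. But two steps you need fail, and the paper's proof has the same two defects.

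\textbf{First gap: the dimension is $6$, not $7$.} Use your own formula $\dim\mathcal{O}(S)=5-\dim\operatorname{Der}_0(S)$. Whenever $S_{\bar1}S_{\bar1}=0$, the map that is $0$ on $S_{\bar0}$ and the identity on $S_{\bar1}$ is an even derivation. Since $\operatorname{Der}(A_1)=0$, this gives $\dim\operatorname{Der}_0(\mathrm{\hat{A}}^{k_1,k_2}_{1,3})=\dim\operatorname{Der}_0(\mathrm{\hat{A}}^{k_1,k_2}_{1,5})=1$. These orbits therefore have dimension $4$, and the two-parameter closures have dimension $6$. The remaining families are smaller still. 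For instance, $\operatorname{Der}_0(\mathrm{\hat{A}}^k_{10_\lambda,1})$ is $2$-dimensional, so its closure has dimension $5$, whereas the paper's table claims $7$. Your guess that the $\mathrm{\hat{A}}_{10_\lambda,*}$ families are smaller is the correct one. So ``dimension $7$'' cannot be proved.

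\textbf{Second gap: $\mathcal{C}_{11},\mathcal{C}_{12},\mathcal{C}_{13}$ are not components, and $\mathrm{\hat{A}}_{11,2}$ is not rigid.} In $\mathrm{\hat{A}}_{7_{t-1},1}$ take $E_1=e_1$, $E_2=e_2-t^{-1}e_1$, $F=f$. Every product keeps its form except $E_2E_2=E_1+(t-1)E_2$, so letting $t\to0$ gives $\mathrm{\hat{A}}_{7_{t-1},1}\to\mathrm{\hat{A}}_{11,2}$. The same basis gives $\mathrm{\hat{A}}^k_{7_{t-1},3}\to\mathrm{\hat{A}}^k_{11,1}$ and $\mathrm{\hat{A}}^k_{7_{t-1},2}\to\mathrm{\hat{A}}^k_{11,3}$; in the latter, $FE_2=(t-1)F\to-F$. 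Hence $\mathcal{C}_{11}\subset\mathcal{C}_6$, $\mathcal{C}_{12}\subset\mathcal{C}_5$ and $\mathcal{C}_{13}\subset\mathcal{C}_{10}$.

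No invariant of the kind you propose can separate these pairs. Both sides of the first degeneration have sub-adjacent Lie superalgebra $D_5$, and both sides of the other two are of type $(D_4)_\mu$. At the even level, the same change of basis gives $(A_7)_{t-1}\to A_{11}$. This also refutes the paper's assertion that $\overline{\mathcal{O}(A_{11})}$ is an irreducible component of the variety of $2$-dimensional left-symmetric algebras. That assertion is exactly what the paper's argument for these three components rests on.

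Consequently at most ten of the listed closures are irreducible components, and $\mathrm{\hat{A}}_{1,6}$ is the only parameter-free superalgebra left among them. The counts in the statement (thirteen components, two rigid superalgebras) cannot be established by your route or by the paper's.
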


\begin{proof}

The dimensions of orbit closures for the superalgebras in the theorem are as follows.

\begin{longtable}{lllllllllllllllll}
$\dim {\mathcal O}( \mathrm{ \hat{A}}_{1,3}^{k_1,k_2} )$&$=$&$\dim {\mathcal O}( \mathrm{ \hat{A}}_{1,5}^{k_1,k_2} )$&$=$&$\dim {\mathcal O}( \mathrm{ \hat{A}}_{10_\lambda,1}^k )$&$=$&$\dim {\mathcal O}( \mathrm{ \hat{A}}_{10_\lambda,2}^k )$&$=$&$7,$\\
$\dim {\mathcal O}( \mathrm{ \hat{A}}_{7_\lambda,2}^k )$&$=$&$\dim {\mathcal O}( \mathrm{ \hat{A}}_{7_\lambda,3}^k )$&$=$&$\dim {\mathcal O}( \mathrm{ \hat{A}}_{8,1}^k )$&$=$&$\dim {\mathcal O}( \mathrm{ \hat{A}_{10_\lambda,3}} )$&$=$&$6,$\\

$\dim {\mathcal O}( \mathrm{ \hat{A}_{1,6}} )$&$=$&$\dim {\mathcal O}( \mathrm{ \hat{A}_{7_\lambda,1}} )$&$=$&$\dim {\mathcal O}( \mathrm{ \hat{A}}_{11,1}^k )$&$=$&$\dim {\mathcal O}( \mathrm{ \hat{A}}_{11,3}^k )$&$=$&$5,$\\

&&&&&&$\dim {\mathcal O}( \mathrm{ \hat{A}_{11,2}} )$&$=$&$4.$
\end{longtable}

Let us first list out all necessary degenerations.
\begin{longtable}{lcl|lcl}
$\mathrm{ \hat{A}^{1}_{2,6}} $ & $\xrightarrow{\big(e_1, \ t e_2, \ f \big)}$ & $\mathrm{ \hat{A}}_{2,4}^{k}$ &
$\mathrm{ \hat{A}}_{1,5}^{k-t,\frac{1}{t}}$ & $\xrightarrow{\big( t^2e_1+e_2, \ te_2, \ f \big)}$ & $\mathrm{ \hat{A}}_{3,7}^k$ \\

$\mathrm{ \hat{A}}^{n}_{2,6} $ & $\xrightarrow{\big(e_1+t^2 e_2, \ t e_2, \ f \big)}$ & $\mathrm{ \hat{A}}_{2,5}$ &
$\mathrm{ \hat{A}}_{1,5}^{k-\frac{1}{t},\frac{1}{t}}$ & $\xrightarrow{\big(e_1+e_2, \ te_2, \ f \big)}$ & $\mathrm{ \hat{A}}_{2,6}^k$ \\

$\mathrm{ \hat{A}}^{n}_{3,3} $ & $\xrightarrow{\big(e_1, \ t e_2, \ f \big)}$ & $\mathrm{ \hat{A}}_{3,2}^{k}$ &
$\mathrm{ \hat{A}}_{1,6}$ & $\xrightarrow{\big(te_1, \ t^2e_2, \ tf \big)}$ & $\mathrm{ \hat{A}}_{4,2}$ \\

$\mathrm{ \hat{A}}^{n}_{3,3} $ & $\xrightarrow{\big(e_1, \ t e_1-t^2 e_2, \ t e_2+f \big)}$ & $\mathrm{ \hat{A}}_{3,4}$ &
$\mathrm{ \hat{A}}_{1,6}$ & $\xrightarrow{\big(e_2, \ te_1, \ f \big)}$ & $\mathrm{ \hat{A}}_{3,8}$ \\

$\mathrm{ \hat{A}}^{n}_{3,7} $ & $\xrightarrow{\big(e_1, \ t e_2, \ f \big)}$ & $\mathrm{ \hat{A}}_{3,6}^{k}$ &
$\mathrm{ \hat{A}}_{1,6}$ & $\xrightarrow{\big(e_1+e_2, \ t^2e_2, \ tf \big)}$ & $\mathrm{ \hat{A}}_{2,7}$ \\

$\mathrm{ \hat{A}}_{5,4} $ & $\xrightarrow{\big(i \sqrt{t} e_1, \ -t e_2, \ \sqrt{t} e_2+f \big)}$ & $\mathrm{ \hat{A}}_{5,1}$ &
$\mathrm{ \hat{A}}_{10_1,2}^k$ & $\xrightarrow{\big(\frac{1}{t}e_1, \ e_2, \ f \big)}$ & $\mathrm{ \hat{A}}_{9,1}^k$ \\

$\mathrm{ \hat{A}}_{5,4} $ & $\xrightarrow{\big(t e_1+t^2 e_2, \ t^2 e_2, \ f \big)}$ & $\mathrm{ \hat{A}}_{5,2}$ &
$\mathrm{ \hat{A}}_{10_1,2}^k$ & $\xrightarrow{\big(\frac{1}{t}e_1, \ e_2, \ f \big)}$ & $\mathrm{ \hat{A}}_{9,2}^k$ \\

$\mathrm{ \hat{A}}_{5,4}$ & $\xrightarrow{\big(t e_1+e_2, \ t^2 e_2, \ f \big)}$ & $\mathrm{ \hat{A}}_{5,3}$ &
$\mathrm{ \hat{A}}_{10_1,3}$ & $\xrightarrow{\big(\frac{1}{t^2}e_1, \ e_2, \ \frac{1}{t}f \big)}$ & $\mathrm{ \hat{A}}_{9,3}$ \\

$\mathrm{ \hat{A}}_{1,3}^{k_1,k_2}$ & $\xrightarrow{\big(t e_1+te_2, \ t e_2, \ f \big)}$ & $\mathrm{ \hat{A}}_{4,1}$ &
$\mathrm{ \hat{A}}_{7_{t-1},3}$ & $\xrightarrow{\big(e_1, \ (1+t)e_2, \ f \big)}$ & $\mathrm{ \hat{A}}_{6,1}$ \\

$\mathrm{ \hat{A}}_{1,3}^{\frac{1}{2t^2},\frac{1}{2t^2}}$ & $\xrightarrow{\big(-t e_1+te_2, \ 2t^2 e_2, \ f \big)}$ & $\mathrm{ \hat{A}}_{5,4}$ &
$\mathrm{ \hat{A}}_{1,3}$ & $\xrightarrow{\big(t^2e_1+e_2, \ te_1, \ f \big)}$ & $\mathrm{ \hat{A}}_{3,1}$ \\

$\mathrm{ \hat{A}}_{1,3}^{k,\frac{1}{t}}$ & $\xrightarrow{\big( e_1, \ te_2, \ f \big)}$ & $\mathrm{ \hat{A}}_{3,3}^k$ &
$\mathrm{ \hat{A}}_{1,3}$ & $\xrightarrow{\big(e_1+(1+t^2)e_2, \ te_1, \ f \big)}$ & $\mathrm{ \hat{A}}_{2,1}$ \\

$\mathrm{ \hat{A}}_{1,3}^{k,0}$ & $\xrightarrow{\big(e_1+e_2, \ te_2, \ f \big)}$ & $\mathrm{ \hat{A}}_{2,2}^k$ &
$\mathrm{ \hat{A}}_{1,5}$ & $\xrightarrow{\big(t^2e_1+e_2, \ te_1, \ f \big)}$ & $\mathrm{ \hat{A}}_{3,5}$ \\

$\mathrm{ \hat{A}}_{1,3}^{k-\frac{1}{t},\frac{1}{t}}$ & $\xrightarrow{\big( e_1+e_2, \ te_2, \ f \big)}$ & $\mathrm{ \hat{A}}_{2,3}^k$ & & & \\
\end{longtable}

Regarding non-degenerations, first note that the closures of the orbits of the left-symmetric algebras 
\(A_1\), \((A_7)_\lambda\), \(A_8\), \((A_{10})_\lambda\), and \(A_{11}\) are precisely the irreducible 
components of the variety of two-dimensional left-symmetric algebras. Consequently, any LSSA whose even part is one of these algebras can degenerate only to LSSAs whose even 
part is the same algebra. Second, the closures of the orbits of the Lie superalgebras \(D_5\) and 
\((D_4)_\mu\) form the irreducible components of the variety of three-dimensional Lie superalgebras 
with a two-dimensional even part (see \cite{AKK}). Moreover, one easily checks that 
\((D_4)_\mu \not\to D_2\), \(D_5 \not\to D_1\), \(D_1 \not\to D_2\), and \(D_2 \not\to D_1\).

What follows is a list explaining all the required non-degenerations.

\begin{longtable}{lcl|l}

$ \mathrm{{\hat{A}}}^{k_1,k_2}_{1,3}, \ \mathrm{{\hat{A}}}^{k_1,k_2}_{1,5} $ & $\not \rightarrow  $ & 
$\begin{array}{l}
\mathrm{ {\hat{A}_{1,6}}}
\end{array}$
& 
$\begin{array}{l}
D_1\not\to D_2

\end{array}$ \\

$ \mathrm{{\hat{A}}}^{k}_{10_\lambda,1}, \ \mathrm{{\hat{A}}}^{k}_{10_\lambda,2} $ & $\not \rightarrow  $ & 
$\begin{array}{l}
\mathrm{{\hat{A}}}^{k}_{10_\lambda,3}
\end{array}$
& 
$\begin{array}{l}
(D_4)_\mu\not\to D_5

\end{array}$ \\

$ \mathrm{{\hat{A}}}^{k}_{7_\lambda,2}, \ \mathrm{{\hat{A}}}^{k}_{7_\lambda,3} $ & $\not \rightarrow  $ & 
$\begin{array}{l}
\mathrm{{\hat{A}}}_{7_\lambda,1}
\end{array}$
& 
$\begin{array}{l}
(D_4)_\mu\not\to D_5

\end{array}$ \\

$ \mathrm{{\hat{A}}}^{k}_{11,1}, \ \mathrm{{\hat{A}}}^{k}_{11,3} $ & $\not \rightarrow  $ & 
$\begin{array}{l}
\mathrm{{\hat{A}}}_{11,2}
\end{array}$
& 
$\begin{array}{l}
(D_4)_\mu\not\to D_5

\end{array}$ \\
\end{longtable}

\end{proof}

\end{document}